\newcommand{\CC}{{\rm\bf C}}
\newcommand{\RR}{{\rm\bf R}}
\newcommand{\QQ}{{\rm\bf Q}}
\newcommand{\ZZ}{{\rm\bf Z}}
\newcommand{\Adeles}{{\rm\bf A}}
\DeclareMathOperator{\Spec}{\mathrm{Spec}}
\DeclareMathOperator{\GL}{\mathrm{GL}}
\DeclareMathOperator{\Oo}{\mathrm {O}}
\DeclareMathOperator{\U}{\mathrm {U}}
\DeclareMathOperator{\SO}{\mathrm {SO}}
\DeclareMathOperator{\Aut}{\mathrm {Aut}}
\DeclareMathOperator{\Hom}{\mathrm {Hom}}
\DeclareMathOperator{\kernel}{\mathrm {ker}}
\DeclareMathOperator{\image}{\mathrm {im}}
\DeclareMathOperator{\Gal}{\mathrm {Gal}}
\DeclareMathOperator{\ind}{\mathrm{Ind}}
\DeclareMathOperator{\res}{\mathrm{Res}}
\DeclareMathOperator{\Lie}{\mathrm{Lie}}
\DeclareMathOperator{\sgn}{\mathrm{sgn}}
\newcommand{\liea}{{\mathfrak {a}}}
\newcommand{\liec}{{\mathfrak {c}}}
\newcommand{\lieg}{{\mathfrak {g}}}
\newcommand{\lieh}{{\mathfrak {h}}}
\newcommand{\liek}{{\mathfrak {k}}}
\newcommand{\liel}{{\mathfrak {l}}}
\newcommand{\lieq}{{\mathfrak {q}}}
\newcommand{\lieu}{{\mathfrak {u}}}
\newcommand{\liegk}{{\mathfrak {gk}}}
\theoremstyle{Theorem}
\newtheorem{introconjecture}{Conjecture}
\newtheorem{introtheorem}[introconjecture]{Theorem}
\theoremstyle{plain}
\newtheorem{theorem}{Theorem}[section]
\newtheorem{lemma}[theorem]{Lemma}
\newtheorem{corollary}[theorem]{Corollary}
\newtheorem{proposition}[theorem]{Proposition}
\newtheorem{conjecture}[theorem]{Conjecture}
\theoremstyle{remark}
\newtheorem{remark}[theorem]{Remark}
\newcommand{\mscfootnote}[1]{{%
  \let\thempfn\relax
  \footnotetext[0]{\emph{#1}}
}}
\begin{document}

\title{On Period Relations for Automorphic~$L$-functions~I}
\author{Fabian Januszewski}
\date{}

\maketitle

\mscfootnote{2010 Mathematics Subject Classification. Primary: 11F67; Secondary: 11F41, 11F70, 11F75, 22E55.}

\begin{abstract}
  This paper is the first in a series of two dedicated to the study of period relations of the type
  $$
    L(\frac{1}{2}+k,\Pi)\;\in\;(2\pi i)^{d\cdot k}\Omega_{(-1)^k}\QQ(\Pi),\quad \frac{1}{2}+k\;\text{critical},
  $$
    for certain automorphic representations $\Pi$ of a reductive group $G.$ In this paper we discuss the case $G=\GL(n+1)\times\GL(n).$ The case $G=\GL(2n)$ is discussed in part two. Our method is representation-theoretic and relies on the author's recent results on global rational structures on automorphic representations. We show that the above period relations are intimately related to the field of definition of the global representation $\Pi$ under consideration. The new period relations we prove are in accordance with Deligne's Conjecture on special values of $L$-functions and we expect our method to apply to other cases as well.
\end{abstract}

{
\tableofcontents
}

\section*{Introduction}

In his solution to the Basel Problem, Euler showed in \cite{euler1937} that the Riemann $\zeta$-function has the remarkable property that
$$
\zeta(2k)\in (2\pi i)^{2k}\QQ^\times,
$$
for all integers $k\geq 1$. Thanks to the work of Riemann \cite{riemann1859}, we also know that this is valid even at the (non-critical) value $2k=0$.

Euler's result found many generalizations, to Dedekind $\zeta$-functions, Dirichlet- and Hecke $L$-functions and $L$-functions attached to modular cusp forms \cite{manin1972,shimura1976}, and more generally Hilbert modular forms \cite{manin1976,shimura1977,shimura1978}. In his book \cite{shimurabook2000}, Shimura extended these results to Siegel modular forms and forms on unitary groups. Recently, Shimura's method has been taken up in the context of orthogonal groups by Furusawa and Morimoto \cite{furusawamorimoto2014,furusawamorimoto2016}. In all these cases it is possible to evaluate the corresponding archimedean zeta integrals explicitly.

Outside the context of Shimura varieties, no such relation is known for Euler products of higher degrees attached to general linear groups, although we expect this to be true by a celebrated Conjecture of Deligne \cite{deligne1979} on special values of $L$-functions.

In this paper, we develop a general method which allows to approach this problem via the cohomology of arithmetic groups combined with representation theoretic methods and exemplify the technique in the case of Rankin-Selberg $L$-functions for $\GL(n+1)\times\GL(n)$ over number fields. We thereby generalize Euler's relation to Euler products of Rankin-Selberg type of higher degree. In the sequel \cite{januszewskipart2}, we apply our method to $\GL(2n)$ where we obtain another generalization of Euler's period relation.

Fix a number field $F$ and consider two irreducible cuspidal regular algebraic representations $\Pi_1$ and $\Pi_2$ of $\GL(n+1)$ and $\GL(n)$ over $F$ respectively. Assume that the cohomological weights of the pair $(\Pi_1,\Pi_2)$ are balanced in the sense of section \ref{sec:arithmeticity}. For reasons that become clear below, it is appropriate to consider $\Pi_1$ and $\Pi_2$ as a single representation $\Pi:=\Pi_1\widehat{\otimes}\Pi_2$ of the reductive group
$$
G\;=\;\res_{F/\QQ}\GL(n+1)\times \res_{F/\QQ}\GL(n),
$$
over $\QQ.$ We let $L(s,\Pi)=L(s,\Pi_1\times\Pi_2)$ denote the finite part of the Rankin-Selberg $L$-function attached to $\Pi$ in the sense of Jacquet, Shalika and Piatetski-Shapiro \cite{jpss1983}, i.e.\ $L(s,\Pi)$ is an Euler product over all finite places without the $\Gamma$ factors.

In this context, the automorphic analogue of Deligne's Conjecture predicts that for each $k\in\ZZ$ such that $s=\frac{1}{2}+k$ is critical for $L(s,\Pi),$
\begin{equation}
L(\frac{1}{2}+k,\Pi)\;\in\;(2\pi i)^{m\cdot k}\cdot\Omega_{(-1)^k}(\Pi)\cdot\QQ(\Pi)
\label{eq:introperiodrelation}
\end{equation}
with two complex constants $\Omega_\pm(\Pi)\in\CC^\times$ independent of $k$,
$$
m\;=\;\frac{(n+1)n}{2}[F:\QQ],
$$
and $\QQ(\Pi)=\QQ(\Pi_1,\Pi_2)$ denotes Clozel's field of rationality of $\Pi$ (known to be a number field \cite{clozel1990}).

We have a well known cohomological definition for periods $\Omega_\pm(\Pi,s_0)\in\CC^\times,$ such that \eqref{eq:introperiodrelation} holds where a priori the periods $\Omega_{(-1)^k}(\Pi)=\Omega_{(-1)^k}(\frac{1}{2}+k,\Pi)$ {\em vary} with $k$.

The behavior of the periods $\Omega_\pm(\Pi,s_0)\in\CC^\times$ under finite order twists has been thoroughly studied by many authors and is now well understood \cite{hida1994,schmidt1993,kazhdanmazurschmidt2000,kastendiss2007,kastenschmidt2008,raghuramshahidi2008,raghuram2010,raghuramtanabe2011,januszewski2011,januszewski2014,januszewski2015,raghuram2015}.

However, at least since Kasten's dissertation \cite{kastendiss2007,kastenschmidt2008} in 2007, where for the first time rationality results in the presence of several critical values were established for $L(s_0,\Pi_1\times(\Pi_2\otimes\chi))$ for $n\geq 2,$ it wasn't clear how to approach the dependence of these cohomological periods in the critical variable $s_0$.

Our first main result is (cf.\ Theorem \ref{thm:globalrankinperiods} in the text)

\begin{introtheorem}\label{thm:introglobalrankinperiods}
  Let $F/\QQ$ denote a number field and let $n\geq 1.$ If $n\geq 3$ or $n\geq 2$ if $F$ admits a complex place, assume the validity of Conjecture \ref{conj:rankinfunctional0} or of Conjecture \ref{conj:rankinfunctional} for a balanced weight $\mu$ of $G.$

  Let $\Pi=\Pi_1\widehat{\otimes}\Pi_2$ be an irreducible cuspidal regular algebraic automorphic representation of $\GL_{n+1}(\Adeles_F)\times\GL_n(\Adeles_F)$ of balanced weight $\mu.$ Then there exist non-zero periods $\Omega_\pm$, numbered by the $2^{r_F}$ characters $\pm$ of $\pi_0(F_\infty^\times)$, such that for each critical half integer $s_0=\frac{1}{2}+k$ for the Rankin-Selberg $L$-function $L(s,\Pi_1\times\Pi_2),$ and each finite order Hecke character
$$
\chi:\quad F^\times\backslash\Adeles_F^\times\to\CC^\times,
$$
we have, in accordance with Deligne's Conjecture (cf.\ Conjecture \ref{conj:deligne}),
$$
\frac{L(s_0,\Pi_1\times(\Pi_2\otimes\chi))}
{G(\overline{\chi})^{\frac{(n+1)n}{2}}(2\pi i)^{k[F:\QQ]\frac{(n+1)n}{2}}
\Omega_{(-1)^{k}\sgn\chi}}
\;\in\;
\QQ_K(\Pi_1,\Pi_2,\chi).
$$
Furthermore, the expression on the left hand side is $\Aut(\CC/\QQ_K)$-equivariant. Here $\QQ_K\subseteq\RR$ is a number field depending only on $F$ and possibly $n,$ and $\QQ_K=\QQ$ for $F$ totally real or a CM field.
\end{introtheorem}

The field $\QQ_K/\QQ$ is the field of definition of a rational model of a maximal compact subgroup in $G(\RR)$ with certain properties (i.e.\ a $\QQ_K(\sqrt{-1})/\QQ_K$-{\em admissible} model in the sense of \cite{januszewski2017}, cf.\ section \ref{sec:compactmodels}).

Conjectures \ref{conj:rankinfunctional0} and \ref{conj:rankinfunctional} provide two different statements which are both sufficient to prove \eqref{eq:introperiodrelation}.

We show that for $n=1,$ both Conjectures \ref{conj:rankinfunctional0} and \ref{conj:rankinfunctional} are always true for all $F$ and all weights and thereby give an entirely representation theoretic proof for Hida's first Main Theorem in \cite{hida1994} for $\GL(2)$ over arbitrary number fields.
We refer the reader to \cite{raghuramtanabe2011} for a representation-theoretic treatment of the Manin-Shimura period relation for Hilbert modular cusp forms from \cite{manin1976,shimura1978}.

In Theorem \ref{thm:conjectures}, we establish Conjectures \ref{conj:rankinfunctional0} and \ref{conj:rankinfunctional} for $n=2$ and $F$ totally real in all balanced weights and thus prove that \eqref{eq:introperiodrelation} holds for the degree $6$ Rankin-Selberg $L$-function for $\GL(3)\times\GL(2)$ over totally real fields $F,$ in which case our period relation is new. Combined with the Gelbart-Jacquet lift, Theorem \ref{thm:introglobalrankinperiods} therefore implies the analogous rationality result for triple products of Hilbert modular forms ${\bf f}\otimes{\bf f}\otimes{\bf g}$ in the balanced case (we refer to \cite{furusawamorimoto2014,furusawamorimoto2016} for the imbalanced case of a general triple product).

We provide further evidence for Conjectures \ref{conj:rankinfunctional0} and \ref{conj:rankinfunctional} in the sequel \cite{januszewskipart2}, where we reduce both conjectures to the continuity of a certain cohomologically induced functional. In loc.\ cit.\ we also establish their analogues in the case of $\GL(2n),$ which allows us to prove the corresponding period relation in this case.

Our approach relies on our construction of global rational structures in \cite{januszewski2017}. $\Pi$ carries a natural global rational structure defined over $\QQ(\Pi).$ Upon restriction to $G(\RR)^0\times G(\Adeles^{(\infty)}),$ $\Pi$ decomposes naturally into a sum
$$
\Pi\;=\;\bigoplus_\pm \Pi_\pm,
$$
where $\pm$ runs again over the $2^{r_F}$ characters of the component group $\pi_0(F_\infty^\times).$ Now the field of definition of $\Pi_\pm$ may be {\em larger} than that of $\Pi$ itself. Indeed, using our results from \cite{januszewski2017}, we prove our second main result.

\begin{introtheorem}\label{thm:introglobalrational}
If $\sqrt{-1}\in\QQ_K(\Pi)$, then $\Pi_\pm$ is defined over $\QQ_K(\Pi)$. Otherwise, $\Pi_\pm$ is defined over $\QQ_K(\Pi)$ if and only if
\begin{equation}
4\mid [F:\QQ](n+1)n.
\label{eq:introdivisibility}
\end{equation}
In all other cases it is defined over $\QQ_K(\Pi,\sqrt{-1}).$
\end{introtheorem}

On the one hand, the divisibility relation \eqref{eq:introdivisibility} in Theorem \ref{thm:introglobalrational} stems ultimately from the fact that a root system of type $B_n$ or $D_n$ admits the negated long Weyl element $-w_0$ as a non-trivial automorphism if and only if it is of type $D_n$ with $n$ odd. In all cases, including root systems of type $A_n$ corresponding to complex places, we exploit that the Brauer obstructions for the groups $\SO(n)$ and $\U(n)$ always vanish.

On the other hand, the divisibility relation \eqref{eq:introdivisibility} is equivalent to
\begin{equation}
i^{[F:\QQ]\frac{(n+1)n}{2}}\;\in\;\QQ,
\label{eq:introiexponent}
\end{equation}
and the left hand side of \eqref{eq:introiexponent} is the contribution of \lq{}$i$\rq{} to \eqref{eq:introperiodrelation} as conjectured by Deligne.

This remarkable coincidence between the symmetry of simple root systems and Deligne's Conjecture is at the heart of our proof of Theorem \ref{thm:introglobalrankinperiods}.

Recently, Harder and Raghuram obtained partial results towards the period relation \eqref{eq:introperiodrelation} in \cite{harder2010,harderraghuram2011,harderraghuram2014preprint} for more general Rankin-Selberg convolutions by exploiting the structure of Eisenstein cohomology: Harder and Raghuram show that the ratio of $L$-values $L(s_0,\Pi_1\times\Pi_2)/L(s_0+1,\Pi_1\times\Pi_2)$ is constant up to rational factors and given by a cohomological period independent of $s_0$. We believe that a proper modification of our method using the results on rational structures in section \ref{sec:cohomologicalinduction} may allow for the computation of the desired period up to rational factors.

{\bf Acknowledgement.} This work, as well as its sequel \cite{januszewskipart2}, is based on the author's Habilitationsschrift \cite{januszewskischrift}. It is the author's great pleasure to thank Claus-G\"unther Schmidt and Stefan K\"uhnlein from Karlsruhe, Don Blasius and Haruzo Hida from UCLA, and Anton Deitmar from T\"ubingen, for being available as members of the author's Habilitation committee. The author thanks A.~Raghuram for very valuable comments and Michael Harris for point him to independent results of Jie Lin from her Dissertation, in which she establishes special cases of Theorem \ref{thm:introglobalrankinperiods} for values in the range of absolute convergence under different hypotheses (cf.\ \cite{lin2016}).

\section{Notation and setup}

We let $\overline{\QQ}\subseteq\CC$ denote the algebraic closure of $\QQ$ inside $\CC$. If $v$ is a place of a field $E$, we write $E_v$ for its completion at $v$. If $E/\QQ$ is a number field, we write $\Adeles_E=\Adeles\otimes_\QQ E$ for the topological ring of adeles over $E$, where $\Adeles$ denotes the ring of adeles over $\QQ$. We write $\Adeles_E^{(\infty)}=\Adeles^{(\infty)}\otimes_\QQ E$ for the ring of finite adeles.

If $E/F$ is a finite separable field extension, we write $\res_{E/F}$ for the functor of restriction of scalars \`a la Weil for the extension $E/F$, sending quasi-projective varieties over $E$ to quasi-projective varieties over $F$. Since this functor preserves finite products, it sends group objects to group objects.

If $G$ is an algebraic or a topological group, we denote its connected component of the identity by $G^0$. We assume henceforth without further mention that the identity component of any linear algebraic group $G$ is always {\em geometrically connected}, i.e.\ the geometrically connected component is already defined over the base field under consideration. Then its component group $\pi_0(G)=G/G^0$ is well defined independently of the base field under consideration. If $\lieg$ is the Lie algebra of $G$, we let $U(\lieg)$ denote its universal enveloping algebra. If $G$ is an algebraic group defined over a field $E$, then $\lieg$ and $U(\lieg)$ are defined over $E$ as well. We adopt the same notation in the context of Lie groups. In this case $U(\lieg)$ is defined over $\RR$, but we usually implicitly consider its complexification in this case.

We write
$$
X(G)\;=\;\Hom(G,\GL_1)
$$
for the group of rational characters of $G$. If $G$ is defined over a field $E$, we denote by $X_E(G)$ the subgroup of characters which are defined over $E$.

A superscript $(\cdot)^\vee$ on a (rational / admissible) representation of $G$, of $G(\RR)$ or a $(\lieg,K)$-module denotes its (rational / admissible) dual.

We assume without loss of generality that all Haar measures on totally disconnected groups we consider have the property that the volumes of compact open subgroups are {\em rational numbers}.

In the body of the paper, $F/\QQ$ denotes a number field admitting $r_F^\RR$ real and $r_F^\CC$ complex places.

\subsection{Algebraic groups}

We are interested in products $G$ of copies of 
$$
G_{n}\;=\;\res_{F/\QQ}\GL_n.
$$
We have a decomposition into a quasi-product
\begin{equation}
G_1\;=\;\res_{F/\QQ}(\GL_1)\;=\;G^{\rm s}_1\cdot G^{\rm an}_1,
\label{eq:rationaltorusdecomposition}
\end{equation}
where $G^{\rm s}_1$ is the maximal $\QQ$-split torus and $G^{\rm an}_1$ is the maximal $\QQ$-anisotropic subtorus in $G_1$. The latter is a quasi-complement of $G^{\rm s}_1$ in $G_1$, i.e. $
G^{\rm s}_1\cap G^{\rm an}_1$ is finite. The projection
$$
p_{\rm s}:\quad G_1\to G_1/G^{\rm an}_1\cong\GL_1
$$
corresponds to the Norm character $N_{F/\QQ}: F^\times\to\QQ^\times,$ and its composition with the determinant induces a character
$$
\mathcal N:=p_{\rm s}\circ\res_{F/\QQ}\det:\quad G_n\to\GL_1.
$$
To describe its behavior on the real points, we introduce for each real archimedean place $v$ of $F$ the local {\em sign character}
$$
\sgn_v:\quad \GL_n(F_v)\;\to\;\RR^\times,\quad h_v\;\mapsto\; \frac{\det(h_v)}{|\det(h_v)|_v}.
$$
By abuse of notation we also consider $\sgn_v$ and the norm $|\cdot|_v$ (implicitly composed with the determinant) as a character of $G_n(\RR)$. Then on real points we have the sign character
$$
\sgn_\infty:=(\otimes_{v\;\text{real}}\sgn_v)\otimes(\otimes_{v\;\text{cplx}}{\bf1}_v):\quad G_n(\RR)\;\to\;\RR^\times,
$$
where ${\bf1}_v:G_n(F_v)\to\RR^\times$ denotes the trivial character. To simplify notation, we drop these trivial factors in the sequel from the notation. Likewise, we have the archimedean norm character $|\cdot|_\infty:=\otimes_v |\cdot|_v: G_1(\RR)\to\RR^\times.$ Then for all $h\in G_n(\RR),$
\begin{equation}
\mathcal N(h)\;=\;
\sgn_\infty(h)\cdot|h|_\infty.
\label{eq:algebraicarchimedeannorm}
\end{equation}
We remark that the group $\Hom(G_n(\RR),\CC^\times)$ of quasi-characters of $G_n(\RR)$ is, as a complex manifold, a disjoint union of $2^{r_F^\RR}\;=\;\#\pi_0(G_n(\RR))$ copies of $\CC$. Each component corresponds uniquely to a finite order character of $G_n(\RR)$ and each such character is of the form
$$
\sgn_\infty^\delta:=\otimes_{v\;\text{real}} \sgn_v^{\delta_v}:\quad G_n(\RR)\to\CC^\times,
$$
with $\delta=(\delta_v)_{v\;\text{real}}\;\in\;\prod_v\{0,1\}.$ Then the component of $\sgn_\infty^\delta$ is parametrized by the charts
\begin{equation}
\CC\ni s\;\mapsto\;\omega^\delta_s\;:=\;\sgn_\infty^\delta\otimes|\cdot|_\infty^s\in\Hom(G_n(\RR),\CC^\times).
\label{eq:quasicharacterchart}
\end{equation}
We call $\omega_s^\delta$ {\em algebraic} whenever $s\in\ZZ$ (this notion does not agree with Andr\'e Weil's notion of being of \lq{}type $A_0$\rq{}). If $\chi$ is a quasi-character of $G_n(\RR)$, we set for $k\in\ZZ$
$$
\chi[k]\;:=\;\chi\otimes\left(\mathcal N^{\otimes k}\right).
$$
If $\chi$ is algebraic, then so is $\chi[k]$.

Fix a non-trivial continuous character $\psi:F\backslash\Adeles_F\to\CC^\times$, and all Gau\ss{} sums we consider are understood with respect to $\psi$, and normalized in such a way that when $\chi=\chi'\otimes|\cdot|_{\Adeles_F}^{k(\chi)}$, with $\chi'$ of finite order, then
$$
G(\chi)\;:=\;
G(\chi')\;:=\;
\sum_{x{\rm mod}\mathfrak{f}_{\chi'}}
\chi'\left(\frac{x}{f_{\chi'}}\right)
\psi\left(\frac{x}{f_{\chi'}}\right),
$$
where $\mathfrak{f}_{\chi'}=\mathcal O_F\cdot f_{\chi'}$ is the conductor of the finite order character $\chi'.$

\subsection{Rational models of compact groups}\label{sec:compactmodels}

We fix for each $m$ a model $K_m\subseteq G_m$ of the standard maximal compact subgroup of $G_m(\RR)$, which is admissible in the sense of section 6 in \cite{januszewski2017} for the quadratic extension $\QQ_K(\sqrt{-1})/\QQ_K$ for a field of definition $\QQ_K\subseteq\RR$ of $K_m,$ which we assume to be a number field and independent of $m.$ In particular, $\QQ_K$ comes with a fixed embedding $\QQ_K\to\RR$ and $K_m$ splits over $E_K:=\QQ_K(\sqrt{-1}).$ The choices in section 6.4 of loc.\ cit.\ show that if $F$ is totally real or a CM field, by choosing $K_m$ quasi-split all all finite primes $\neq 2,$ we can arrange for $\QQ_K=\QQ.$ Recall
\begin{equation}
K_m(\RR)\;=\;
\Oo(m,\RR)^{r_F^\RR}\times
\U(m,\RR)^{r_F^\CC}.
\label{eq:klocaldecomposition}
\end{equation}
Corresponding to $K_m,$ we have a Cartan involution $\theta_m:G_m(\RR)\to G_m(\RR)$, which is defined over $\QQ_K$. Therefore we may and do consider $\theta_m$ as an automorphism of $G_m$ over $\QQ_K$. We write $\tau_K\in\Gal(E_K/\QQ)$ for the unique non-trivial automorphism, which we simply call {\em complex conjugation}.

We remark that we have a natural isomorphism
$$
\pi_0(K_m)\;=\;\pi_0(K_m(\RR)),
$$
where $K_m(\RR)$ on the right hand side is considered as a real Lie group.

\subsection{Rational models of pairs}

We let $G$ denote a connected reductive group over $\QQ$, $K\subseteq G$ a closed reductive subgroup, and write $\lieg$, $\liek$, $\lieg_n$, ... for the Lie algebras of $G$, $K$, $G_n$, ... respectively. All these Lie algebras are defined over $\QQ$. To stress the base field $E/\QQ$ under consideration, we write $\lieg_E\;:=\;\lieg\otimes_\QQ E,$ and similarly $G_E\;:=\;G\times_\QQ E$ for the base change of $G\to\Spec\QQ$ to $G_E\to\Spec E$. Then $(\lieg_E,K_E)$ is a reductive pair over $E$ in the sense of section 1.4 of \cite{januszewski2017}. For the sake of readability, introduce the notation $(\lieg,K)_E\;:=\;(\lieg_E,K_E),$ that we also apply to more general pairs, which we will discuss in more detail in section \ref{sec:hcmodules} below.

We have a natural isomorphism between the group of quasi-characters $\Hom(G_n(\RR),\CC^\times)$ and the group of one-dimensional $(\lieg_m,K_m)_\CC$-modules, sending a quasi-character $\chi$ to its derivative $d\chi: \lieg_{m,\CC}\to\CC,$ and to the complexification of its restriction to $K_m(\RR)$, $\chi|_{K_m(\RR),\CC}: K_m(\CC)\to\CC^\times.$ For the sake of notational simplicity we write $\omega_s^\delta$ also for the image of $\omega_s^\delta$ under this correspondence. Then, since $\mathcal N$ is defined over $\QQ$, and $\sgn_\infty^\delta$, as a rational character of $K_m$, is defined over $\QQ_K$ as well, we see with \eqref{eq:algebraicarchimedeannorm}, that for each $s\in\ZZ$ the algebraic quasi-character
$$
\omega_s^\delta\;=\;\sgn_\infty^\delta\otimes(\sgn_\infty\otimes\eta)^s,
$$
considered as a one-dimensional $(\lieg_m,K_m)$-module, is defined over $\QQ_K$ as well. We write $X^{\rm alg}(G_n(\RR))$ for the group of algebraic quasi-characters of $G_m(\RR)$ or $(\lieg_n,K_n)$, equivalently.

\subsection{Finite-dimensional representations}

Since $G$ is quasi-split we may find a Borel $P\subseteq G$ defined over $\QQ$. We choose a maximal torus $T\subseteq P$ over $\QQ$ and denote by $P^-$ the opposite of $P$. We let $X(T)$ denote the characters of $T$ defined over $\overline{\QQ}$ or $\CC$, which amounts to the same, and let $X_E(T)$ denote the subgroup of characters defined over $E$. Then the absolute Galois group $\Gal(\overline{\QQ}/\QQ)$ and $\Aut(\CC/\QQ)$ act naturally on $X(T)$ from the right via the rule
\begin{equation}
\mu^\tau(t)\;=\;\mu\left(t^{\tau^{-1}}\right)^{\tau},
\label{eq:galoismu}
\end{equation}
for $\mu\in X(T)$, $t\in T(\CC)$ and $\tau\in\Aut(\CC/\QQ)$. In general there is a second action on $X(T)$, introduced Borel and Tits in \cite{boreltits1965,boreltits1972}, denoted ${}_{\Delta}\tau(\mu)$ in loc.\ cit. The latter action maps dominant weights to dominant weights and reflects the action of $\Gal(\overline{\QQ}/\QQ)$ and $\Aut(\CC/\QQ)$ on the category of rational representations. However, since $G$ is quasi-split these two actions agree in our case. In particular if $M_\mu$ denotes the irreducible rational $G$-module of highest weight $\mu\in X(T)$ over $\CC$ (or $\overline{\QQ}$), then the Galois twisted module
\begin{equation}
(M_\mu)^\tau\;:=\;M_\mu\otimes_{\CC,\tau^{-1}} \CC
\label{eq:galoistwistedmodule}
\end{equation}
is irreducible of highest weight $\mu^\tau$. Furthermore, it is easy to see that $M_\mu$ is defined over the field of rationality $\QQ(\mu)$ of $\mu$. For all these assertions we refer to section 12 in \cite{boreltits1972}.

\subsection{Harish-Chandra modules}\label{sec:hcmodules}

If $(V,\rho)$ is a Casselman-Wallach representation of $G(\RR)$, we write $V^{(K)}$ for the subspace of $K(\RR)$-finite vectors. This is a finitely generated admissible $(\lieg,K)_\CC$-module, and is irreducible if and only if $V$ is (topologically) irreducible. Indeed, the categories of finite length $(\lieg,K)_\CC$-modules is equivalent to the category of Casselman-Wallach representations of $G(\RR)$, i.e. finitely generated, admissible, Fr\'echet representations of moderate growth. That this correspondence fails for non-admissible modules is at the heart of the period relation problem under investigation.

We give a sketch of the theory of Harish-Chandra modules over arbitrary fields of characteristic $0$. The relevant theory of cohomological induction over general fields will be taken up in section \ref{sec:cohomologicalinduction}. For the fundamental theory of modules and cohomological induction over arbitrary fields, as well as the rationality results we need, we refer to \cite{januszewski2017}.

A pair over a field $E/\QQ$ consists of an $E$-Lie algebra $\liea_E$ and a reductive algebraic group $B_E$ over $E$, together with an inclusion
$$
\iota_E:\quad \Lie(B_E)\to\liea_E,
$$
of Lie algebras and an action of $B_E$ on $\liea_E$, extending the action of $B_E$ on $\Lie(B_E)$, whose derivative is the adjoint action of $\Lie(B_E)$ on $\liea_E$, the latter action being induced by $\iota_E$.

Then an $(\liea,B)_E$-module $X_E$ consists of an $E$-vector space $X_E$ together with compatible actions of $\liea_E$ and $B_E$. Here we implicitly assume that $X_E$ is a {\em rational} $B_E$-module, which amounts to saying that it is a direct sum of finite-dimensional rational representations of $B_E$. Then this rational action induces an action of $\Lie(B_E)$ on $X_E$, and we assume the action of $\liea_E$ to be an extension of this action. Furthermore, we assume the given adjoint action of $B_E$ on $\liea_E$ and the given action on $X_E$ to be compatible with the action of $\liea_E$ in the usual sense.

Then the category $\mathcal C(\liea,B)_E$ of $(\liea,B)_E$-modules (over $E$) is an abelian category, even an $E$-linear tensor category. For each extension of fields $E'/E$ we have natural base change functors
$$
-\otimes_E E':\quad \mathcal C(\liea,B)_E\to \mathcal C(\liea,B)_{E'},
$$
sending an $E$-rational module $X_E$ to the $E'$-rational module
$$
X_{E'}\;:=\;X_E\otimes_E E',
$$
which is an $(\liea,B)_{E'}$-module. A fundamental property of the base change functor is
$$
\Hom_{\liea,B}(X_E,Y_E)\otimes_E E'\;=\;
\Hom_{\liea,B}(X_{E'},Y_{E'})
$$
for $X_E,Y_E\in\mathcal C(\liea,B)_E$ under suitable finiteness conditions, cf.\ Proposition 1.1 in \cite{januszewski2017}. This observation enables us to control the rationality of functionals.

In this context, remark that the algebraic quasi-characters in $X^{\rm alg}(G_m(\RR))$, since characters of the pair $(\lieg_m,K_m)$, are all defined over $\QQ_K.$

We remark furthermore that if $(\liea,B)$ is a pair over $\QQ$, then the Galois action on modules defined in \eqref{eq:galoistwistedmodule} extends naturally to arbitrary $(\liea,B)$-modules.

\section{Cohomologically induced modules over $\QQ$}\label{sec:cohomologicalinduction}

In this section we discuss the fields of definition of tensor products of tempered cohomologically induced standard modules on products of $\GL_n$. We first recall classical results which are fundamental to our subsequent treatment.

\subsection{Representations of orthogonal groups}\label{sec:repon}

For $n\geq 1$, we choose a maximal torus $T\subseteq \SO(2n+1)$. Then the root system $\Delta\subseteq X^*(T)$ of $\SO(2n+1)$ is of type $B_n$, and we identify
$$
X^*(T)\otimes_\ZZ\RR=\RR^n,
$$
by means of the standard orthonormal basis $e_1,\dots, e_n$ of $\RR^n$. We assume that in our notation the root system is given by
$$
\Delta\;=\;\{\pm e_i\pm e_j\mid i<j\}\;\cup\;\{\pm e_i\},
$$
our choice of simple roots in this notation being $e_1-e_2,e_2-e_3\dots,e_{n-1}-e_n,e_n.$ The orthogonal group $\Oo(2n+1)$ only has inner automorphisms and is a direct product
$$
\Oo(2n+1)\;=\;\SO(2n+1)\times\{\pm 1\},
$$
and thus irreducible $\Oo(2n+1)$-modules $W^\pm(\lambda)$ are indexed by a sign $\pm$ and analytically integral dominant weights
$$
\lambda\;=\;\lambda_1 e_1+\cdots+\lambda_n e_n,\quad\lambda_1,\dots,\lambda_n\in\ZZ,
$$
for $\SO(2n+1)$, satisfying the dominance condition
$$
\lambda_1\geq \lambda_2\geq\cdots\geq
\lambda_n\geq 0.
$$
In the even case, consider for $n\geq 2$ the root system
$$
\Delta\subseteq X^*(T)\subseteq X^*(T)\otimes_\ZZ\RR=\RR^n,
$$
of $\SO(2n)$ for a maximal torus $T\subseteq\SO(2n)$, with the same standard basis as above. It is of type $D_n$, and $\Delta\;=\;\{\pm e_i\pm e_j\mid i<j\}.$ Fix the simple roots as $
e_1-e_2,e_2-e_3\dots,e_{n-1}-e_n,e_{n-1}+e_n.$

The outer automorphism group of $\Oo(2n)$ is of order two and the orthogonal group $\Oo(2n)$ is the unique non-split semidirect product
$$
\Oo(2n)\;=\;\SO(2n)\rtimes\{\pm 1\}.
$$
By classical Mackey Theory there are two distinct cases for the structure of irreducible rational $\Oo(2n)$-modules. Given an analytically integral dominant weight $\lambda$ of $\SO(2n)$, i.e.\ satisfying
$$
\lambda_1\geq \lambda_2\geq\cdots\geq
|\lambda_n|,
$$
the induced representation
$$
W(\lambda)\;=\;\ind_{\SO(2n)}^{\Oo(2n)}V(\lambda)
$$
is irreducible whenever $\lambda_n\neq 0$, and its isomorphism class depends only on the tuple $(\lambda_1, \lambda_2,\dots,|\lambda_n|)$. In the case $\lambda_n=0$, the module $\ind_{\SO(2n)}^{\Oo(2n)}V(\lambda)$ decomposes into two non-isomorphic irreducible $\Oo(2n)$-representations $W^\pm(\lambda)$, which differ by a sign twist.

The following classical result will be relevant for our descent argument in the proof of Theorem \ref{thm:connectedbottomlayerdefinition}.

\begin{proposition}\label{prop:realorthogonalrepresentations}
For every $n\geq 1$ and every analytically integral dominant weight $\lambda$ for $\SO(2n+1)$ and every sign $\pm$, the irreducible complex $\Oo(2n+1)$-module $W^{\pm}(\lambda)$ is self-dual and real, i.e.\ defined over $\RR$, hence so is the underlying irreducible $\SO(2n+1)$-module.\\
Likewise, for every analytically integral dominant weight $\lambda$ for $\SO(2n)$ with $\lambda_n\neq 0$ the irreducible complex $\Oo(2n)$-module $W(\lambda)$ is always real. As a complex $\SO(2n)$-module it decomposes into a direct sum
$$
W(\lambda)\;=\;V(\lambda)\oplus V(\tilde{\lambda})
$$
of two irreducible complex $\SO(2n)$-modules. $V(\lambda)$ (resp.\ $V(\tilde{\lambda})$) is real if and only if it is self-dual. This is so if and only if $n$ is even.
\end{proposition}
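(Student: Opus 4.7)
The plan is to reduce everything to two structural facts: the action of the longest Weyl element $w_0$ on the weight lattice, which controls self-duality, and a Frobenius--Schur computation, which distinguishes real from quaternionic among the self-dual representations. Recall that for a compact group $G$ and an irreducible complex $G$-module $V$ with character $\chi_V$, the Frobenius--Schur indicator $\int_G\chi_V(g^2)\,dg$ equals $+1$ if $V$ is real, $-1$ if $V$ is quaternionic, and $0$ if $V$ is not self-dual. So the aim is to pin down these invariants for the specific $\Oo(2n\pm 1)$-modules in the statement.

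For $\Oo(2n+1)$ I would proceed as follows. Since $\Oo(2n+1)=\SO(2n+1)\times\{\pm I\}$, every irreducible is of the form $V(\lambda)\otimes\varepsilon$ with $\varepsilon$ a one-dimensional real character, and the problem reduces to $\SO(2n+1)$. In type $B_n$ the longest Weyl element acts as $-1$ on $X^*(T)$, so $V(\lambda)^\vee\cong V(-w_0\lambda)=V(\lambda)$, giving self-duality. For reality I would use that every irreducible of $\SO(2n+1)$ appears as a Cartan summand in a tensor product of exterior powers of the standard real representation on $\RR^{2n+1}$ (the non-spinorial fundamental weights generating the whole weight monoid); the isotypic component of a self-dual summand inside a real representation of a compact group inherits a real structure, ruling out the quaternionic possibility. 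Tensoring with the real character $\varepsilon$ then transports reality to $W^{\pm}(\lambda)$.

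For $\Oo(2n)$ I would first invoke Mackey theory for the semidirect product $\Oo(2n)=\SO(2n)\rtimes\{\pm I\}$. The outer involution acts on $X^*(T)$ by $\lambda\mapsto\tilde\lambda$, so whenever $\lambda_n\neq 0$ the induced representation $W(\lambda)=\ind_{\SO(2n)}^{\Oo(2n)}V(\lambda)$ is irreducible over $\Oo(2n)$ and restricts to $V(\lambda)\oplus V(\tilde\lambda)$. In type $D_n$ the longest Weyl element equals $-1$ when $n$ is even and $-s$ when $n$ is odd, where $s$ is the diagram involution, so $V(\lambda)^\vee$ equals $V(\lambda)$ when $n$ is even and $V(\tilde\lambda)\neq V(\lambda)$ when $n$ is odd. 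This yields the self-duality dichotomy and rules out reality of $V(\lambda)$ in the odd case. Reality of $W(\lambda)$ itself follows in the odd case because $V(\lambda)\oplus V(\lambda)^\vee$ carries the canonical symmetric evaluation pairing, and in the even case because each $V(\lambda)$ is real by the same tensor-algebra argument as in the $B_n$ case, using that the Hodge-star decomposition of $\Lambda^n\RR^{2n}$ is defined over $\RR$ precisely when $n$ is even (corresponding to $\ast\ast=+1$ on middle-dimensional forms).

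The principal obstacle is the Frobenius--Schur sign in the self-dual cases, that is, ruling out quaternionic type. I would dispense with this uniformly via the tensor-algebra observation above, which avoids any case-by-case character computation and reduces the entire proposition to the combinatorics of $w_0$ and of the outer automorphism, together with Mackey's description of induced representations on a semidirect product, yielding all three assertions simultaneously.
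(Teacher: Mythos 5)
Most of your argument is sound, and it takes a genuinely different route from the paper: the paper simply quotes Propositions 6.1--6.3 of \cite{januszewskipreprint} for all the reality statements and only verifies the final self-duality claim by the one-line computation $-w_0(\lambda)=(\lambda_1,\dots,\lambda_{n-1},-\lambda_n)=\tilde{\lambda}$ for odd $n$, whereas you attempt a self-contained proof. Your $w_0$-analysis in types $B_n$ and $D_n$, the Mackey description of $W(\lambda)$, and the Cartan-summand argument are correct. Two small points of precision: the general principle ``the isotypic component of a self-dual summand inside a real representation inherits a real structure, ruling out the quaternionic possibility'' is only valid when the multiplicity is odd (a quaternionic representation taken with even multiplicity also carries a real structure); it is saved here because the Cartan constituent occurs with multiplicity one, which you should say. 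Also, for $\SO(2n+1)$ the non-spinorial fundamental weights alone do not generate the integral weight monoid; you need $2\omega_n$, i.e.\ the highest weight of $\Lambda^n$ of the standard representation, but since you allow all exterior powers this is harmless.

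The genuine gap is the reality of $W(\lambda)$ for odd $n$. Exhibiting the canonical symmetric evaluation pairing on $V(\lambda)\oplus V(\lambda)^\vee$ only produces an $\SO(2n)$-invariant symmetric form on the restriction. Since $W(\lambda)$ is irreducible over $\Oo(2n)$, the space of $\Oo(2n)$-invariant bilinear forms on it is one-dimensional inside the two-dimensional space of $\SO(2n)$-invariant ones, and whether that invariant form is symmetric or alternating is precisely the point at issue. Concretely, writing $W(\lambda)=V\oplus\epsilon V$ with $\epsilon$ a reflection, invariance of your pairing under $\epsilon$ amounts to the symmetry (rather than antisymmetry) of the intertwiner $V(\lambda)^{\epsilon}\cong V(\lambda)^\vee$ regarded as a bilinear form on the underlying space, equivalently to a twisted Frobenius--Schur indicator being $+1$; nothing in your argument excludes the alternating case, in which $W(\lambda)$ would be of symplectic type even though its restriction carries a symmetric $\SO(2n)$-invariant form. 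To close the gap, either compute this twisted indicator, or invoke (or reprove via Weyl's harmonic-tensor construction, which is carried out over $\RR$) the classical fact that every irreducible representation of the compact group $\Oo(m)$ is defined over $\RR$; with that in hand the remaining assertions follow exactly as you argue.
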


\begin{proof}
By Proposition 6.10 in \cite{januszewski2017} every self-dual irreducible complex $\SO(2n)$- and $\SO(2n+1)$-module is real. Hence the claim for $\Oo(2n+1)$ and $\Oo(2n)$ follows with Lemma 6.3 of loc.\ cit.

The only detail which is not explicitly covered by this reasoning is the last statement that for a dominant weight $\lambda$ with $\lambda_n\neq 0$ the module $V(\lambda)$ is self-dual only for even $n$. If $n$ is even, the action of $-w_0$ on the weight space is trivial. For odd $n$ we have for the weight $\lambda$ under consideration
$$
-w_0(\lambda)\;=\;(\lambda_1,\dots,\lambda_{n-1},-\lambda_{n})\;=\;\tilde{\lambda},
$$
and since $\lambda_n\neq 0$, the claim follows.
\end{proof}

\subsection{Representations of unitary groups}

For unitary groups we content us to import the well known

\begin{proposition}[Special case of Proposition 6.14 in \cite{januszewski2017}]\label{prop:realunitaryrepresentations}
Let $V$ be an irreducible complex representation of $\U(n).$ Then $V$ is defined over $\RR$ if and only if it is self-dual.
\end{proposition}

This proposition may be proved by elementary means using root systems and Tits' Theorem was well (cf.\ the proof of Proposition 6.14 in \cite{januszewski2017}).

\subsection{Induction data}

We fix a finite sequence $n_1,n_2,\dots,n_r$ of $r\geq 1$ positive integers and set
$$
G\;:=\;\res_{F/\QQ}\GL_{n_1}\times\GL_{n_2}\times\cdots\times\GL_{n_r},
$$
where $F/\QQ$ is a number field as before. Recall that $\lieg$ denotes the rational Lie algebra of $G$ over $\QQ$, and we fixed the standard choice
\begin{equation}
K\;:=\;K_{n_1}\times K_{n_2}\times\cdots\times K_{n_r}\;\subseteq\;G
\label{eq:kproduct}
\end{equation}
of a $\QQ_K$-rational model of a standard maximal compact subgroup of $G(\RR)$ as in section \ref{sec:compactmodels}. Along with $K$ comes a Cartan involution $\theta$ of $G$ and $\lieg$, which is defined over $\QQ_K$ as well.

The product decomposition of $G$ induces a product decomposition
\begin{equation}
\lieg\;=\;\lieg_1\times\lieg_2\times\cdots\times\lieg_r,
\label{eq:gproduct}
\end{equation}
and similarly for the Lie algebra $\liek$ of $K$.

Recall that we fixed an imaginary quadratic extension $E_K/\QQ$ where $K$ is split and that $K$ is $E_K/\QQ_K$-admissible in the sense of \cite{januszewski2017}. In later parts of the text we will exploit that $E_K$ is given by $\QQ_K(\sqrt{-1})$, but in this section a general purely imaginary quadratic extension $E_K/\QQ_K$ where $K$ splits is just as good. We fix a $\theta$-stable Borel subalgebra $\lieq\subseteq\lieg_{E_K}$, where $\theta$-stability means that
\begin{equation}
\theta(\lieq)\;=\;\lieq.
\label{eq:thetastability}
\end{equation}
Later in the text we will assume $\lieq$ to be transversal to a specific diagonally embedded Lie algebra $\lieh$, but for now any $\theta$-stable $\lieq$ which factors as a product
$$
\lieq\;=\;\lieq_1\times\lieq_2\times\cdots\times\lieq_r,
$$
according to \eqref{eq:gproduct} is sufficient for our purpose.

By \eqref{eq:thetastability}, the Lie algebra
$$
\lieq\cap\liek_{E_K}\subseteq\liek_{E_K}
$$
is a Borel subalgebra of $\liek_{E_K}$. Again by \eqref{eq:thetastability}, complex conjugation $\tau_K\in\Gal(E_K/\QQ)$ maps $\lieq$ to its opposite
$$
\overline{\lieq}\;:=\lieq^{\tau_K}\;=\;\lieq^-,
$$
if we consider the unique $\theta$-stable and $\QQ_K$-rational Levi factor given explicitly by $\liec\;:=\;\lieq\cap\overline{\lieq}.$ Again, $\liec$ factors as a product
$$
\liec\;=\;\liec_1\times\liec_2\times\cdots\times\liec_r,
$$
accordingly. Let $\lieu\subseteq\lieq$ denote the nilpotent radical of $\lieq$. Then $\lieq_{E_K}\cap\liek_{E_K}$ has the Levi decomposition
$$
\lieq_{E_K}\cap\liek_{E_K}\;=\;(\liec_{E_K}\cap\liek_{E_K})\oplus (\lieu_{E_K}\cap\liek_{E_K}).
$$

We fix the compact factor $C$ of the Levi pair associated to $\lieq$ as follows. For each index $1\leq i\leq r$ we set
$$
C_i\;:=\;Z_{K_i}(\liec_i).
$$
If $n$ is even, then $C_i$ is contained in $K_i^0$; if $n$ is odd, it meets every connected component of $K_i$. Finally
$$
C\;:=\;C_1\times C_2\times\cdots\times C_r.
$$
Then $C$ is defined over $\QQ_K,$ and $(\lieq,C)_{E_K}$ is what we call a $\theta$-stable {\em parabolic pair} over $E_K$.

We consider rational models of cohomologically induced $(\lieg_\CC,K_\CC)$-standard-modules $A_{\lieq}(\mu)_\CC$ arising as follows.

Departing from a rational absolutely irreducible representation $M(\mu)$ of $G$ with highest weight $\mu$ with respect to $\lieq_\CC,$ we have a one-dimensional highest weight space $H^0(\lieu;M(\mu))$ inside of $M(\mu)$. Since $C$ normalizes $\lieq$, the inclusion $C\to G$ induces an action of $C$ on $M(\mu)$ under which $H^0(\lieu;M(\mu))$ is stable. Hence, we obtain a character $\mu$ of the pair $(\liec,C)_\CC$, or equivalently of the parablic pair $(\lieq,C)_\CC$ with trivial action of the radical. This agrees with the natural action of the pair $(\lieq,C)_\CC$ on $H^0(\lieu;M(\mu))$. All these data are indeed defined over $E_K(\mu)=E_K\cdot\QQ(\mu)$, where $\QQ(\mu)$ is the field of rationality of $M(\mu)$, once we fix an inclusion
\begin{equation}
\iota:\quad E_K(\mu)\to\CC,
\label{eq:complexembedding}
\end{equation}
extending the given emnbedding $\QQ_K\to\RR$. We assume this to be the case in all what follows.

In \cite{januszewski2017}, the author introduced Zuckerman functors over arbitrary fields of characteristic $0$. Put
$$
S_\lieq\;:=\;\dim(\lieu_{E_K}\cap\liek_{E_K}),
$$
and denote by $R^q\Gamma_{\lieg,C}^{\lieg,K}$ the $E_K(\mu)$-rational $q$-th right derived Zuckerman functor for the inclusion of pairs $(\lieg,C)\to(\lieg,K)$, as introduced in \cite{januszewski2017}. The construction of Zuckerman functors in loc.\ cit.\ generalizes Zuckerman's original construction discussed in \cite{book_vogan1981}. For the fundamental properties of Zuckerman's cohomological induction over $\CC,$ the reader is refered to the excellent monograph \cite{book_knappvogan1995}.

As in the classical case, the functor $\Gamma_{\lieg,CK^0}^{\lieg,K}$ is exact and given by induction along the inclusion $CK^0\to K$. Now $\Gamma_{\lieg,C}^{\lieg,CK^0}$, being right adjoint to an exact forgetful functor, maps injectives to injectives. Therefore, the Grothendieck spectral sequence associated to the composition
$$
\Gamma_{\lieg,C}^{\lieg,K}\;=\;\Gamma_{\lieg,CK^0}^{\lieg,K}\circ \Gamma_{\lieg,C}^{\lieg,CK^0}
$$
induces in each degree $q$ a natural isomorphism of functors
\begin{equation}
R^q\Gamma_{\lieg,C}^{\lieg,K}\;=\;\Gamma_{\lieg,CK^0}^{\lieg,K}\circ R^q\Gamma_{\lieg,C}^{\lieg,CK^0}
\label{eq:gammacomposition}
\end{equation}
Using the author's rational construction, we have the $E_K(\mu)$-rational module
$$
A_\lieq(\mu)\;:=\;R^{S_\lieq}\Gamma_{\lieg,C}^{\lieg,K}(\Hom_{\lieq}(U(\lieg_{E_K(\mu)}),\mu\otimes\bigwedge^{\dim\lieu}\lieu_{E_K(\mu)})^{(C)}),
$$
where the superscript $(\cdot)^{(C)}$ denotes the subspace of $C$-finite vectors as before. Then, since rational Zuckerman functors commute with base change in the case at hand (Theorem 2.5 of \cite{januszewski2017}), we have an isomorphism
$$
A_{\lieq}(\mu)\otimes\CC\;\cong\;A_{\lieq}(\mu)_\CC,
$$
which we use to identify the right hand side with the left hand side, justifying this abuse of notation.

For the study of periods we introduce the intermediate module
$$
A_\lieq^\circ(\mu)\;:=\;R^{S_\lieq}\Gamma_{\lieg,C}^{\lieg,CK^0}(\Hom_{\lieq}(U(\lieg_{E_K(\mu)}),\mu\otimes\bigwedge^{\dim\lieu}\lieu_{E_K(\mu)})^{(C)}).
$$
By \eqref{eq:gammacomposition}, its relation to $A_\lieq(\mu)$ is
\begin{equation}
A_\lieq(\mu)_{E_K}\;=\;\Gamma_{\lieg,CK^0}^{\lieg,K}(A_\lieq^\circ(\mu)_{E_K}),
\label{eq:aqintermediateinduction}
\end{equation}
and by Frobenius reciprocity, we may consider $A_\lieq^\circ(\mu)$ as an irreducible $(\lieg,CK^0)_{E_K(\mu)}$-submodule of $A_\lieq(\mu)_{E_K(\mu)}$, and the $K$-translates of the former generate the latter as an $E_K(\mu)$-vector space.

To be more precise, $A_\lieq(\mu)$ decomposes into a direct sum
\begin{equation}
A_\lieq(\mu)\;=\;\bigoplus_{\varepsilon}\varepsilon A_\lieq^\circ(\mu),
\label{eq:standardmoduledecomposition}
\end{equation}
where $\varepsilon$ runs through a suitable system of representatives of $K/CK^0.$ The author showed in Theorem 7.3 of \cite{januszewski2017} that $A_{\lieq}(\mu)$ is actually defined over $\QQ_K(\mu)$. In general, this is no more true for $A_\lieq^\circ(\mu)$ in the presence of real places. We will determine its field of definition in Theorem \ref{thm:connectedbottomlayerdefinition} below. This result is crucial for our applications to special values.

For the sake of completeness, we remark that the natural embedding $\QQ_K(\mu)\to E_K(\mu)$ induces with \eqref{eq:complexembedding} an embedding $\QQ_K(\mu)\to\CC$, which we understand fixed in the sequel. In particular, \eqref{eq:moduleisomorphism} retains its meaning also for the model of $A_{\lieq}(\mu)$ over $\QQ_K(\mu)$, the latter being unique by virtue of Proposition 3.4 of loc.\ cit.

\subsection{The bottom layer}

This section recalls fundamental facts about the structure of bottom layer of the standard modules of interest as a rational representation of $K$ and $K^0$, respectively. The understanding of rationality properties of the bottom layer is key to the understanding of the rationality properties of the intermediate modules $A_{\lieq}^\circ(\mu)$.

The bottom layer is defined as
$$
B_{\lieq}(\mu)_{E_K(\mu)}\;:=\;
A_{\lieq\cap\liek}(\mu|_{C}\otimes\!\!\!\!\!\bigwedge^{\dim\lieu/\lieu\cap\liek}\!\!\!\!\!\!\!(\lieu/\lieu\cap\liek))_{E_K(\mu)}
\;\subseteq\;
A_{\lieq}(\mu)_{E_K(\mu)},
$$
where
$$
A_{\lieq\cap\liek}(\cdot)\;
\;:=\;
R^{S_\lieq}\Gamma_{\liek,C}^{\liek,K}
\left((\cdot)\otimes\!\!\!\!\!\bigwedge^{\dim\lieu\cap\liek}\!\!\!\!\!\lieu\cap\liek\right).
$$
The descent argument in the proof of Theorem 7.3 in \cite{januszewski2017} tells us that the bottom layer itself is defined over $\QQ_K(\mu)$. Again its model over $\QQ_K(\mu)$ is unique and we denote it the same.

As in the non-compact case, introduce the intermediate module
$$
B_{\lieq}^\circ(\mu|_{C})_{E_K(\mu)}:=A_{\lieq\cap\liek}^\circ(\mu|_{C})_{E_K(\mu)}
:=R^{S_\lieq}\Gamma_{\liek,C}^{\liek,CK^\circ}(\mu|_{C}\otimes\bigwedge^{\dim\lieu}\lieu_{E_K(\mu)})
\subseteq A_{\lieq}^\circ(\mu)_{E_K(\mu)}.
$$
This is an irreducible representation of $K^0$ of highest weight
\begin{equation}
\mu_K\;:=\;\mu|_{C^0}+2\rho(\lieu/\lieu\cap\liek),
\label{eq:ktypeweight}
\end{equation}
where
$$
\rho(\lieu/\lieu\cap\liek)\;=\;\frac{1}{2}\sum_{\alpha\in\Delta(\lieu/\lieu\cap\liek,\liec)}\alpha,
$$
is the half sum of weights in $\lieu/\lieu\cap\liek$. Again,
\begin{equation}
B_\lieq(\mu)_{E_K(\mu)}\;=\;\Gamma_{\lieg,CK^0}^{\lieg,K}(B_\lieq^\circ(\mu)_{E_K(\mu)}).
\label{eq:bqintermediateinduction}
\end{equation}

\subsection{Structure of the bottom layer}

We assume that $n_1,\dots,n_s$ are even and $n_{s+1},\dots,n_r$ are odd, for some $0\leq s\leq r$. By the preceeding discussion and the transitivity principle \eqref{eq:bqintermediateinduction}, the representation $B_{\lieq}(\mu)_{\CC}$ decomposes as a complex representation of
\begin{eqnarray*}
K^0(\RR)&=&
\left(\SO(n_1,\RR)^{r_F^\RR}\times\U(n_1,\RR)^{r_F^\CC}\right)\times
\left(\SO(n_2,\RR)^{r_F^\RR}\times\U(n_1,\RR)^{r_F^\CC}\right)
\times\\
&&\cdots\times\left(\SO(n_r,\RR)^{r_F^\RR}\times\U(n_r,\RR)^{r_F^\CC}\right),
\end{eqnarray*}
according to Proposition \eqref{prop:realorthogonalrepresentations} into a direct sum
\begin{equation}
B_{\lieq}(\mu)_{\CC}\;=\;
\bigotimes_{i=1}^s
\left(
\bigotimes_{v\;\text{real}}\left(B_{\mu_{v,i}}^{\SO}\oplus B_{\tilde{\mu}_{v,i}}^{\SO}\right)
\otimes
\bigotimes_{v\;\text{cplx}}B_{\mu_{v}}^{\U}
\right)
\otimes
\bigotimes_{i=s+1}^r
\left(
\bigotimes_{v\;\text{real}}B_{\mu_{v,i}}^{\SO}
\otimes
\bigotimes_{v\;\text{cplx}}B_{\mu_{v}}^{\U}
\right),
\label{eq:bottomlayerdecomposition}
\end{equation}
where $B_{\mu_{v,i}}^{\SO}$ is the complex irreducible representation of $\SO_{n_i}(\RR)$ of highest weight $\mu_{v,i}$, $B_{\mu_{v,i}}^{\U}$ is the complex irreducible representation of $\U(n_i,\RR)$ of highest weight $\mu_{v,i}$, and finally
$$
\mu_K\;=\;\otimes_{i=1}^r\otimes_{v\mid\infty}\;\mu_{i,v}
$$
is the natural factorization of $\mu_K$ according to \eqref{eq:kproduct} and the component-wise factorization into local components. Similarly, we have
\begin{equation}
  B_{\lieq}^\circ(\mu)_{\CC}\;=\;
  \bigotimes_{i=1}^r
  \left(
  \bigotimes_{v\;\text{real}}B_{\mu_{v,i}}^{\SO}
  \otimes
  \bigotimes_{v\;\text{cplx}}B_{\mu_{v}}^{\U}
  \right),
\label{eq:bottomlayerzerodecomposition}
\end{equation}
again as a complex representation of $K^0(\RR)$.

We remark that since the representation $B_{\lieq}^\circ(\mu)_{\CC}$ is defined over $E_K(\mu)$, the summands in the direct sum decompositions \eqref{eq:standardmoduledecomposition} and \eqref{eq:bottomlayerdecomposition} are defined over $E_K(\mu)$ as well. However, we need to decide when these summands descend to $\QQ_K(\mu).$

\begin{lemma}\label{lem:connectedbottomlayerdefinition}
The representation $B_{\lieq}^\circ(0)_{\CC}$ of $K^0$ and the $(\lieg,K^0)$-module $A_{\lieq}^\circ(0)_\CC$ are defined over $E_K$. The module $B_{\lieq}^\circ(0)_{\CC}$ resp.\ $A_{\lieq}^\circ(0)_\CC$ is defined over $\QQ_K$ if and only if for every $1\leq i\leq r$ we have
\begin{equation}
2\mid n_i\;\Longrightarrow\;4\mid n_i.
\label{eq:paritycondition}
\end{equation}
\end{lemma}

\begin{proof}
By the descent argument (cf.\ proof of Theorem 7.3 in \cite{januszewski2017}), the statement of the lemma about the bottom layer is equivalent to the statement that the complex representation $B_{\lieq}^\circ(0)_{\CC}$ is real if and only if \eqref{eq:paritycondition} is satisfied.

For $B_{\lieq}^\circ(0)_{\CC}$ to be real it is necessary that $B_{\lieq}^\circ(0)_{\CC}$ be self-dual. By Proposition \ref{prop:realorthogonalrepresentations}, this cannot be the case if there is an index $1\leq i\leq r$ for which $n_i\equiv 2\pmod 4$. Moreover, self-duality is also a sufficient condition, again by Proposition \ref{prop:realorthogonalrepresentations}. Hence the claim for $B_{\lieq}^\circ(0)_{\CC}$ follows.

As in the proof of Theorem 7.3 of \cite{januszewski2017} we may appeal to Proposition 5.8 of loc.\ cit.\ to conclude that $A_{\lieq}^\circ(0)_{\CC}$ is defined over $\QQ_K$. Alternatively we may argue that, as a submodule
$$
A_{\lieq}^\circ(0)_{E_K}\;\subseteq\;A_{\lieq}(0)_{E_K}
$$
of a module which is defined over $\QQ_K$ (Theorem 7.3 of loc.\ cit.), it is defined over $\QQ$ if and only if it is invariant under the action of $\Gal(E_K/\QQ)$, and this is so if and only if $B_{\lieq}^\circ(0)_{E_K}$  is invariant under the action of $\Gal(E_K/\QQ)$, since each irreducible submodule of $A_{\lieq}(0)_{E_K}$ is uniquely determined by the unique minimal $K^0$-type it contains.
\end{proof}

\begin{theorem}\label{thm:connectedbottomlayerdefinition}
The $K^0$-module $B_{\lieq}^\circ(\lambda)_{\CC}$ and the $(\lieg,K^0)$-module $A_{\lieq}^\circ(\lambda)_\CC$ are defined over $E_K(\mu)$. If $\sqrt{-1}\not\in\QQ_K(\mu)$, then $B_{\lieq}^\circ(\lambda)_{\CC}$ resp.\ $A_{\lieq}^\circ(\lambda)_\CC$ is defined over $\QQ_K(\mu)$ if and only if for every index $1\leq i\leq r$,
$$
2\mid n_i\;\Longrightarrow\;4\mid n_i.
$$
\end{theorem}

We remark that the same rationality statement applies to the summands in the direct sum decomposition \eqref{eq:bottomlayerdecomposition} and in the decomposition of $A_\lieq(\mu)$ into absolutely irreducible $(\lieg,K^0)$-modules.

\begin{proof}
We follow the argument of the proof of Proposition 7.1 in \cite{januszewski2017}. By Lemma \ref{lem:connectedbottomlayerdefinition}, the $(\lieg,K^0)_{E_K}$-submodule
$$
A_{\lieq}^\circ(0)_{E_K}\;\subseteq\;A_{\lieq}(0)_{E_K}
$$
is defined over $\QQ_K$ if and only if \eqref{eq:paritycondition} is satisfied for all $1\leq i\leq r$. Assume $\sqrt{-1}\not\in\QQ_K(\mu),$ otherwise there is nothing to prove. Consider for each dominant integral weight $\nu$ the translation functor
$$
{\mathcal T}_{\mu}^{\nu}:\quad X\mapsto \pi_{\nu}(X\otimes M(\mu)),
$$
where $\pi_{\nu}$ denotes the projection onto the submodule on which the center $Z(\lieg)\subseteq U(\lieg)$ acts as in $M(\nu)$. Then we have a commutative square
$$
\begin{CD}
A_{\lieq}^\circ(\mu)_{E_K(\mu)}@>\sim>>{\mathcal T}_{\mu}^{\mu}(A_{\lieq}^\circ(0)_{E_K(\mu)})\\
@VVV @VVV\\
A_{\lieq}(\mu)_{E_K(\mu)}@>\sim>>{\mathcal T}_{\mu}^{\mu}(A_{\lieq}(0)_{E_K(\mu)})\\
\end{CD}
$$
and likewise for the contragredient weight $\mu^\vee$,
$$
\begin{CD}
A_{\lieq}^\circ(0)_{E_K(\mu)}@>\sim>>{\mathcal T}_{\mu^\vee}^{0}(A_{\lieq}^\circ(\mu)_{E_K(\mu)})\\
@VVV @VVV\\
A_{\lieq}(0)_{E_K(\mu)}@>\sim>>{\mathcal T}_{\mu^\vee}^{0}(A_{\lieq}(\mu)_{E_K(\mu)})\\
\end{CD}
$$
Therefore, $A_{\lieq}^\circ(\mu)_{E_K(\mu)}$ is defined over $\QQ_K(\mu)$ if and only if $A_{\lieq}^\circ(0)_{E_K(\mu)}$ has a model over $\QQ_K(\mu)$.

In light of Lemma \ref{lem:connectedbottomlayerdefinition}, this proves the implication
$$
\left(\forall i:
2\mid n_i\Longrightarrow 4\mid n_i\right)
\;\Longrightarrow\;A_{\lieq}^\circ(\mu)_{E_K(\mu)}\;\text{is defined over}\;\QQ_K(\mu).
$$

To see the other implication, it remains to show by the preceeding discussion that if $A_{\lieq}^\circ(0)_{E_K(\mu)}$, being a translate of $A_{\lieq}^\circ(\mu)_{E_K(\mu)}$, has a model over $\QQ_K(\mu)$, then $A_\lieq^0(0)_{E_K}$ has already a model over $\QQ_K$.

Due to our assumption that $\sqrt{-1}\not\in\QQ_K(\mu)$, the extensions $E_K$ and $\QQ_K(\mu)$ are linearly disjoint over $\QQ$, and restriction induces an isomorphism of Galois groups
\begin{equation}
\begin{CD}
\Gal(E_K(\mu)/\QQ_K(\mu))@>\sim>>\Gal(E_K/\QQ).
\end{CD}
\label{eq:galoisisomorphism}
\end{equation}
Therefore, if $A_{\lieq}^\circ(0)_{E_K(\mu)}$ has a model over $\QQ_K(\mu)$, complex conjugation $\tau_K\in\Gal(E_K(\mu)/\QQ_K(\mu))$ stabilizes $A_{\lieq}^\circ(0)_{E_K(\mu)}$ as a submodule of $A_{\lieq}(0)_{E_K(\mu)}$. By \eqref{eq:galoisisomorphism} it therefore stabilizes the subspace $A_{\lieq}^\circ(0)_{E_K}$, which, by Galois descent for vector spaces, therefore is defined over $\QQ_K$. This implies, with Lemma \ref{lem:connectedbottomlayerdefinition},
\begin{equation}
\forall i:\quad
2\mid n_i\;\Longrightarrow\; 4\mid n_i.
\label{eq:divisibilityrelation}
\end{equation}
This concludes the proof of the statement about standard modules.

To determine the fields of definition of the minimal $K^0$-types, we observe that
$$
B_{\lieq}^\circ(\mu)_{E_K(\mu)}\;=\;B_{\lieq}(\mu)_{E_K(\mu)}\cap A_{\lieq}^\circ(\mu)_{E_K(\mu)},
$$
which shows that $B_{\lieq}^\circ(\mu)_{E_K(\mu)}$ is defined over $\QQ_K(\mu)$ whenever \eqref{eq:divisibilityrelation} is satisfied.

Assume conversely that $B_{\lieq}^\circ(\mu)_{E_K(\mu)}$ is defined over $\QQ_K(\mu)$. As in the end of the proof of Lemma \ref{lem:connectedbottomlayerdefinition} we may argue via Galois descent for vector spaces, that this implies that $A_{\lieq}^\circ(\mu)_{E_K(\mu)}$, as a submodule of $A_{\lieq}(\mu)_{E_K(\mu)}$, is defined over $\QQ_K(\mu)$, whence \eqref{eq:divisibilityrelation}.
\end{proof}

In the case $F=\QQ$, $s=1$, i.e.\ if there is only one even index $n_1$, and if furthermore $4\nmid n_1$, the decomposition \eqref{eq:bottomlayerdecomposition} corresponds to the decomposition of $A_{\lieq}(\mu)$ into the sum of a holomorphic and antiholomorphic discrete series representation. It also reflects the decomposition of $(\lieg,GK^0)$-cohomology that we will discuss below.

\section{Rankin-Selberg convolutions}

From this section on and for all what follows, we put ourselves in the situation where $G=G_{n+1}\times G_n$ and $H=G_n\subseteq G$ is diagonally embedded via
$$
h\;\mapsto\;\begin{pmatrix}h&\\&1\end{pmatrix}.
$$
We fix the $\QQ$-rational model of a maximal compact subgroup of $G$ as $K:=K_{n+1}\times K_n$ with $K_n$ as in section \ref{sec:compactmodels} and let $L:=H\cap K$. Then $L=K_n\subseteq G_n=H$.

Any pair of cuspidal automorphic representation $\Pi_1$ and $\Pi_2$ of $\GL_{n+1}(\Adeles_F)$ and $\GL_n(\Adeles_F)$, respectively, gives rise to an automorphic representation $\Pi$ of $G(\Adeles)$, which is the completed tensor product of $\Pi_1$ and $\Pi_2$. Call $\Pi$ {\em regular algebraic} if $\Pi_1$ and $\Pi_2$ are regular algebraic. In the same spirit we identify the local components $\Pi_v$, $v$ a place of $\QQ$, with the corresponding (completed) tensor products of the corresponding local components of $\Pi_1$ and $\Pi_2$. Representations $\Pi_\infty$, $\Pi_{1,\infty}$, $\Pi_{2,\infty}$ at $\infty$ are always assumed to be smooth, i.e.\ departing possibly from a Hilbert space representation, we implicitly pass to the subspace of smooth vectors with the corresponding Fr\'echet topology. Departing from an irreducible $\Pi$, the archimedean representations then are irreducible Casselman-Wallach representations, and $\Pi_\infty$ is a completed projective tensor product $\Pi_{1,\infty}\widehat{\otimes}\Pi_{2,\infty}.$

We write $N=N_{n+1}\times N_n\subseteq G=G_{n+1}\times G_n$ for the restriction of scalars of the group of unipotent upper triangular matrices, i.e.\ $N_m\subseteq G_m$ denotes the restriction of scalars of the group of unipotent upper triangular matrices in $\GL_m$. We fix a non-trivial continuous character
$$
\psi:\quad N(\QQ)\backslash{}N(\Adeles)\to\CC^\times
$$
with the property that its restriction to $N(\QQ)\cap H(\QQ)\backslash{}N(\Adeles)\cap H(\Adeles)$ be trivial. To be more concrete, we assume $\psi$ to be of the form
$$
\begin{pmatrix}
(u_{ij})_{1\leq i,j\leq n+1} \times (v_{ij})_{1\leq i,j\leq n}\;\mapsto\;
\prod_{k=1}^n \psi(u_{kk+1})
\cdot
\prod_{l=1}^{n-1}\psi(v_{ll+1})^{-1}
\end{pmatrix}.
$$
We fix Haar measures $dg$, $dn$, $dh$ on $G(\Adeles)$, $N(\Adeles)$ and $H(\Adeles)$, and use subscripts $(\cdot)_v$ to denote their local factors at a place $v$.

We denote by $\mathscr{W}(\Pi,\psi)$ the $\psi$-Whittaker model of $\Pi$, and we are in particular concerned with the model at infinity $\mathscr{W}(\Pi_\infty,\psi_\infty)$. Since $\Pi$ is a cuspidal representation of $G(\Adeles)$, $\Pi$ is globally generic, and hence the Whittaker model exists \cite{shalika1974}. We are interested in the Rankin-Selberg $L$-function $L(s,\Pi)$ in the sense of Jacquet, Shalika and Piatetski-Shapiro \cite{jpss1983}.

\subsection{The archimedean zeta integral}\label{sec:archimedeanintegral}

For each quasi-character $\chi\in\Hom(H(\RR),\CC^\times)$ and each $w\in\mathscr{W}(\Pi_\infty,\psi_\infty)$, the archimedean Rankin-Selberg integral
\begin{equation}
\Psi_\infty:\quad (\chi,w)\;\mapsto\;\int_{N(\RR)\backslash{}H(\RR)}w(h_\infty)\chi(h_\infty)dh_\infty
\label{eq:archimedeanintegral}
\end{equation}
converges absolutely, provided $\chi$ lies a suitable right half plane in the sense of \eqref{eq:quasicharacterchart}. In such a half plane $\Psi_\infty(-,w)$ defines a holomorphic function in $\chi$. More concretely the map
\begin{equation}
e_\infty:\quad (\chi,w)\;\mapsto\;\frac{\Psi_\infty(\chi,w)}{L(\frac{1}{2},\Pi_\infty\otimes\chi)}
\label{eq:archimedeanintegralratio}
\end{equation}
is well defined for {\em all} $\chi$, since the right hand side defines an entire function in $\chi$, cf.\ Theorem 1.2 in \cite{cogdellpiatetskishapiro2004}
(see also \cite{jacquet2009}), $\Hom(H(\RR),\CC^\times)$ being a disjoint union of finitely many copies of $\CC$. For each $\chi$ we may find a $w\in\mathscr W(\Pi_\infty,\psi_\infty)^{(K)}$ with the property that
\begin{equation}
e_\infty(\chi,w)\;\neq\; 0.
\label{eq:archimedeanintegralnonzero}
\end{equation}
Following an argument of Jacquet and Shalika (see the remark after Theorem 1.3 in \cite{cogdellpiatetskishapiro2004}), we know that for fixed $K$-finite $w$, the map
$$
\chi\;\mapsto\; e_\infty(\chi,w)
$$
is on each connected component of $\Hom(H(\RR),\CC^\times)$ given by a polynomial in the complex parameter of that component. This in particular implies that $e_\infty(-,w)$ is locally constant in the variable $\chi$ whenever \eqref{eq:archimedeanintegralnonzero} is satisfied for {\em all} quasi-characters $\chi$. In other words, for such a $w$ we know that the Rankin-Selberg integral \eqref{eq:archimedeanintegral} represents the $\Gamma$-factor of the Rankin-Selberg $L$-function $L(s,\Pi\otimes\chi)$ up to a complex unit, the latter only depending on the connected component containing $\chi.$ We will see below that we may indeed choose a vector $w$ which satisfies \eqref{eq:archimedeanintegralnonzero} for all quasi-characters $\chi.$ If $e_\infty(\chi,w)=1,$ for all $\chi$ in the same connected component, we call $w$ a {\em good test vector} for $\Pi_\infty\otimes\chi.$

In a representation theoretic sense, the integral \eqref{eq:archimedeanintegral} for $\chi$ in a suitable right half plane and \eqref{eq:archimedeanintegralratio} for general $\chi$, therefore defines, in the variable $w$, a continuous $H(\RR)$-equivariant functional
$$
e_\infty(\chi,-):\quad\mathscr W(\Pi_\infty,\psi_\infty)\to\chi^{-1}_{\CC},
$$
the right hand side denoting the one-dimensional complex $H(\RR)$-module with action given by the inverse quasi-character $\chi^{-1}.$

\begin{remark}\label{rmk:twistinvarianceatinfinity}
  Since $\Pi_\infty\otimes\chi\cong\Pi_\infty$ for every finite order character $\chi$, we have for such a $\chi$ an identity of local $L$-functions
$$
L(s,\Pi_\infty\otimes\chi)\;=\;L(s,\Pi_\infty).
$$
\end{remark}

\subsection{The non-archimedean zeta integrals}

Fix a rational prime $p$. For each quasi-character $\chi\in\Hom(H(\QQ_p),\CC^\times)$ and each $w\in\mathscr{W}(\Pi_p,\psi_p)$, the non-archimedean analogue of \eqref{eq:archimedeanintegral} is
\begin{equation}
\Psi_p:\quad (\chi,w)\;\mapsto\;\int_{N(\QQ_p)\backslash{}H(\QQ_p)}w(h_p)\chi(h_p)dh_p.
\label{eq:nonarchimedeanintegral}
\end{equation}
Again for $\chi$ in a suitable right half plane the integral in \eqref{eq:nonarchimedeanintegral} is absolutely convergent and in such a half plane $\Psi_p(-,w)$ defines a holomorphic function in $\chi$. The map
\begin{equation}
e_p:\quad (\chi,w)\;\mapsto\;\frac{\Psi_p(\chi,w)}{L(\frac{1}{2},\Pi_p\otimes\chi)}
\label{eq:nonarchimedeanintegralratio}
\end{equation}
is well defined for all $\chi$ and $w$ and defines an entire function in the variable $\chi$ on each connected component of $\Hom(H(\QQ_p),\CC^\times)$. As in the archimedean case we always find for a given $\chi$ a $w\in\mathscr W(\Pi_p,\psi_p)$ such that
$$
e_p(\chi,w)\;\neq\;0.
$$
This then implies that for each quasi-character $\chi$ there is a {\em good vector} $t_p^\chi$, depending only on the connected component of $\chi$ in $\Hom(H(\QQ_p),\CC^\times)$, satisfying
$$
e_p(\chi,t_p^\chi)\;=\;1.
$$
For almost all primes $p$ the spherical vector
$$
t_p^0\in\mathscr W(\Pi_p,\psi_p)^{G(\ZZ_p)}
$$
does the job. This is at least the case if $\Pi_p$ and $\chi_p$ are unramified, $\psi_p$ has conductor $\Oo\otimes\ZZ_p$, and
$$
\int_{H(\ZZ_p)}dh_p\;=\;1,
$$
which is always satisfied for almost all primes $p$.

\subsection{The global zeta integral}

By the preceeding discussion we may choose for each global quasi-character
$$
\chi:\quad H(\QQ)\backslash H(\Adeles)\to\CC^\times,
$$
a factorizable vector
$$
t^\chi\;=\;\otimes_{v}t_v^{\chi_v}\;\in\;\mathscr W(\Pi,\psi)^{(K)},
$$
where $v$ runs through the finite places of $\QQ$ and for almost all (finite) places $v$,
$$
t_v^{\chi_v}\;=\;t_v^0.
$$
Then the inverse Fourier transform
$$
\Theta(t^{\chi}):\quad g\;\mapsto\;
\sum_{\gamma\in G_n(\QQ)\times G_{n-1}(\QQ)}
t^\chi\left(
\begin{pmatrix}
\gamma&\\
&1
\end{pmatrix}\cdot g\right)\;\in\;\Pi^{(K)}
$$
gives rise to the global integral representation
$$
\Lambda(s,\Pi\otimes\chi)\;=\;I(s,\chi,\Theta(t^\chi)):=
\int_{H(\QQ)\backslash H(\Adeles)}\Theta(t^\chi)(h)\chi\otimes\omega_{s-\frac{1}{2}}^0(h)dh,
$$
of the completed $L$-function which converges absolutely for every $s\in\CC$ and thus defines an entire function in $s$. For $s$ in a suitable right half plane the global integral decomposes into the infinite Euler product of the local zeta integrals \eqref{eq:archimedeanintegral} and \eqref{eq:nonarchimedeanintegral}, and we write $L(s,\Pi\otimes\chi)$ for the corresponding incomplete $L$-function.

\section{Period relations}

In this section we proof the expected period relations, that we will compare to Deligne's Conjecture in the motivic context in the last section.

\subsection{Arithmeticity conditions}\label{sec:arithmeticity}

Following the terminology of \cite[Section 3]{januszewski2015}, adapted to the totally real case, we call an absolutely irreducible rational $G$-module $M$ over $E/\QQ$ {\em arithmetic}, if all its Galois twists $M^{\sigma}$ ares essentially conjugate self-dual over $\QQ,$ i.e.
$$
\left(M^{\sigma}\right)^{{\rm c},\vee}\;\cong\;M^\sigma\otimes\xi,
$$
with a $\QQ$-rational character $\xi\in X_\QQ(G)$ independent of $\sigma\in\Gal(\overline{\QQ}/\QQ).$ We remark that if $F$ is totally real, all its Galois twists $M^\sigma,$ $\sigma\in\Gal(\overline{\QQ}/\QQ)$ are automatically (conjugate) self-dual if one if its twists has this property.

The center $Z$ of $G$ may be naturally identified with $G_1\times G_1$. We may identify it with a factor of the maximal torus $T\subseteq G$, i.e.\
$$
T\;=\;
T^{\rm der}\cdot (G_1\times G_1),
$$
where $T^{\rm der}=T\cap G^{\rm der}.$ The decomposition \eqref{eq:rationaltorusdecomposition} of $G_1$ induces a natural projection
$$
p_T:\quad 
T\to T/(T^{\rm der}\cdot (G_1^{\rm an}\times G_1^{\rm an}))=:Z^{\rm s}\;\cong\;\GL_1\times\GL_1,
$$
and thus a monomorphism
$$
p_T^*:\quad X_\QQ(Z^{\rm s})\to X_\QQ(T).
$$
We fix the identification
$$
Z^{\rm s}\;=\;
\GL_1\times\GL_1,
$$
once and for all in such a way that each factor of $\GL_1$ corresponds to the corresponding factor of the center of $G$. This gives us an identification $X_\QQ(Z^{\rm s})=\ZZ^2,$ and we simply write $(w_1,w_2)$ for the image of $(w_1,w_2)\in X_\QQ(Z^{\rm s})$ under $p_T^*$.

If $M$ is of highest weight $\mu$, we denote by $\mu^{\vee}$ the highest weight of $M^{\vee}$. Then $M$ is arithmetic if and only if
$$
\left(\mu^\sigma\right)^{\rm c,\vee}-\mu^\sigma\;\in\;\image(p_T^*)
$$
and this character is independent of $\sigma\in\Gal(\overline{\QQ}/\QQ).$ Then
\begin{equation}
(\mu^\sigma)^{\rm c,\vee}-\mu^\sigma
\;=\;
(w_1,w_2),
\label{eq:prearithmeticitiyweights}
\end{equation}
with $(w_1,w_2)$ independent of $\sigma\in\Gal(\overline{\QQ}/\QQ).$

A $\QQ$-rational character $\xi\in X_\QQ(H)\cong\ZZ$ is called {\em critical} for $M$ if
$$
\Hom_H(M,\xi)\;\neq\;0.
$$
Due to the multiplicity one property of the pair $\GL_{n+1}|\GL_n$ (cf.\ the non-compact analogue of Proposition \ref{prop:bonedimensionalfunctionals} below), we have
\begin{equation}
\dim_{\QQ_K(\mu)}\Hom_H(M_{\QQ_K(\mu)},\xi_{\QQ_K(\mu)})\;=\;1,
\label{eq:criticaldimensionone}
\end{equation}
for each critical $\xi$. We call $M$ {\em balanced} if it admits a critical character $\xi$.

\subsection{Rational test vectors}

We specialize the notation of section \ref{sec:cohomologicalinduction} to the case
\begin{eqnarray*}
n_1&=&
\begin{cases}
n,&2\mid n,\\
n+1,&2\nmid n,
\end{cases}\\
n_2&=&\frac{(n+1)n}{n_1},
\end{eqnarray*}
and $r=2$. Hence, $n_1$ is even and $n_2$ is odd. Recall $E_K=\QQ_K(\sqrt{-1})$ as before and fix a $\theta$-stable Borel $\lieq\subseteq\lieg$, which is a product of two $\theta$-stable parabolic subalgebras in each factor of $G=G_{n+1}\times G_n$.
We assume $\lieq$ to be transversal to the Lie algebra $\lieh$ of $H$, i.e.\
\begin{equation}
\lieg_{E_K}\;=\;\lieq_{E_K}\oplus \lieh_{E_K}.
\label{eq:transversality}
\end{equation}

We assume that $\Pi_\infty$ has non-trivial relative Lie algebra cohomology with coefficients in $M(\mu)^\vee$. We know that $M(\mu)$ is arithmetic if such a cuspidal $\Pi$ exists. By \cite{voganzuckerman1984} that we have an isomorphism
\begin{equation}
\iota_\infty:\quad A_\lieq(\mu)_\CC\;\to\;\mathscr W(\Pi_\infty,\Psi_\infty)^{(K)}
\label{eq:moduleisomorphism}
\end{equation}
where the base change on the left hand side is implicitly understood via the fixed embedding \eqref{eq:complexembedding}.

For any real archimedean place $v$ of $F,$ fix an $l_v\in K_n(\QQ_K)$ with the property that its image in
$$
\pi_0(K_n)\;\cong\;\{\pm1\}^{r_F^\RR}
$$
is non-trivial in the factor $\{\pm1\}$ corresponding to $v$ and trivial in the factors corresponding to other real places. Assume without loss of generality that $l_v$ is chosen in the image of a homomorphic section
$$
\pi_0(K_n)\to K_n.
$$
We consider $l_v$ as an element of $L(\QQ)$ via the respective identification
\begin{equation}
G_n\;=\;L.
\label{eq:gnembedding}
\end{equation}
Then the image of $l_v$ in $\pi_0(K)$ corresponds likewise under the isomorphism
$$
\pi_0(K)\;\cong\;
\{\pm1\}^{r_F^\RR}\times\{\pm1\}^{r_F^\RR},
$$
to the element whose $v$-component is $(-1,-1)$ and trivial otherwise. Therefore, the action of $l_v$ on $B_{\lieq}(\mu)$ interchanges the two modules $B_{\mu_{v,1}}^{\SO}\otimes B_{\mu_{v,2}}^{\SO}$ and $B_{\tilde{\mu}_{v,1}}^{\SO}\otimes B_{\mu_{v,2}}^{\SO}$ and leaves the other factors $B_{\mu_{v',1}}^{\bullet}\otimes B_{\mu_{v',2}}^{\bullet},$ $v'\neq v,$ in \eqref{eq:bottomlayerzerodecomposition} invariant. In particular, this shows that any irreducible $K^0$-submodule of $B_{\lieq}(\mu)_\CC$ generates $B_{\lieq}(\mu)_\CC$ as a representation of $L$ and even of $L\cap H_1$.

The transversality condition \eqref{eq:transversality} yields
$$
\liek_{\CC}\;=\;\liel_{\CC}\oplus(\lieq_{\CC}\cap\liek_{\CC})
$$
and by the Poincar\'e-Birkhoff-Witt Theorem we have a canonical isomorphism
\begin{equation}
U(\liek_{\CC})\;=\;U(\liel_{\CC})\otimes U(\lieq_{\CC}\cap\liek_{\CC})
\label{eq:utransversality}
\end{equation}
of $\CC$-vector spaces. As a consequence, we obtain

\begin{proposition}\label{prop:bonedimensionalfunctionals}
For each character $\chi\in X_\CC(L),$
$$
\dim_\CC\Hom_L(B_{\lieq}(\mu)_\CC,\chi_\CC)\;\leq\;1.
$$
\end{proposition}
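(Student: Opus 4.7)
The plan is to exploit the Poincar\'e-Birkhoff-Witt isomorphism \eqref{eq:utransversality} together with the cohomologically induced structure of the bottom layer in order to pin down a single generating vector. First I would fix a highest weight vector $v_0\in B_{\lieq}^\circ(\mu)_\CC$ for the irreducible $K^0$-module $B_{\lieq}^\circ(\mu)_\CC$ of highest weight $\mu_K$ as in \eqref{eq:ktypeweight}, with respect to the Borel subalgebra $\lieq_\CC\cap\liek_\CC$ of $\liek_\CC$. Since $v_0$ is annihilated by the nilradical $\lieu_\CC\cap\liek_\CC$ and transforms under the Levi part $\liec_\CC\cap\liek_\CC$ through the character $\mu_K$, we have
$$
U(\lieq_\CC\cap\liek_\CC)\cdot v_0\;\subseteq\;\CC v_0.
$$
Combined with the PBW decomposition \eqref{eq:utransversality} this yields
$$
B_{\lieq}^\circ(\mu)_\CC\;=\;U(\liek_\CC)\cdot v_0\;=\;U(\liel_\CC)\cdot v_0,
$$
so that already $v_0$ generates the irreducible $K^0$-summand as a $U(\liel_\CC)$-module.

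Next I would promote this to a generation statement for the full bottom layer by $L$. By the remark preceding the proposition, any irreducible $K^0$-submodule of $B_{\lieq}(\mu)_\CC$ generates $B_{\lieq}(\mu)_\CC$ as a representation of $L$ (via the translations by the $l_v$'s). Hence
$$
B_{\lieq}(\mu)_\CC\;=\;L\cdot B_{\lieq}^\circ(\mu)_\CC\;=\;U(\liel_\CC)\cdot L\cdot v_0.
$$

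Finally, given any $L$-equivariant functional $\phi\in\Hom_L(B_{\lieq}(\mu)_\CC,\chi_\CC)$, by differentiating the equivariance condition one sees that for all $X\in U(\liel_\CC)$ and $l\in L$,
$$
\phi(l\cdot X\cdot v_0)\;=\;\chi(l)\cdot d\chi(X)\cdot\phi(v_0),
$$
so $\phi$ is completely determined by the scalar $\phi(v_0)\in\CC$. This immediately gives
$$
\dim_\CC\Hom_L(B_{\lieq}(\mu)_\CC,\chi_\CC)\;\leq\;1.
$$

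The only delicate point here is ensuring that $\lieq\cap\liek$ really is a Borel subalgebra of $\liek_{E_K}$ with the correct Levi decomposition $(\liec\cap\liek)\oplus(\lieu\cap\liek)$, so that the highest weight vector $v_0$ has the required annihilation/character property; this is exactly what was arranged in the construction of the induction data, so no new work is required. The proof is thus essentially formal once one observes that the combination of transversality, PBW, and the reach of $L$ across the irreducible $K^0$-summands of $B_{\lieq}(\mu)_\CC$ reduces the whole question to the one-dimensional space $\CC\cdot v_0$.
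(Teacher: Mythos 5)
Your argument is correct and is essentially the paper's own proof (an adaptation of Sun's Lemma 2.10): both rest on the transversality/PBW identity \eqref{eq:utransversality} to see that the highest weight line for $\lieq_\CC\cap\liek_\CC$ generates $B_\lieq^\circ(\mu)_\CC$ under $U(\liel_\CC)$, and on the fact that the $L$-translates of $B_\lieq^\circ(\mu)_\CC$ span $B_\lieq(\mu)_\CC$. The only cosmetic difference is that the paper first bounds $\Hom_{L^0}(B_\lieq^\circ(\mu)_\CC,\chi_\CC)$ and then descends, whereas you determine an $L$-equivariant functional directly by its value on $v_0$; the content is the same.
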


\begin{proof}
We sketch a proof for the convenience of the reader, which is an adaption of an argument in \cite[Lemma 2.10]{sunpreprint2}. We show first the analoguous statement
$$
\dim_\CC\Hom_{L^0}(B_{\lieq}^\circ(\mu)_\CC,\chi_\CC)\;\leq\;1.
$$
To this point, let
$$
\lambda\;\in\;\Hom_{L^0}(B_{\lieq}^\circ(\mu)_\CC,\chi_\CC).
$$
Assume that the restriction of $\lambda$ to the one-dimensional highest weight space
$$
H^0(\lieu_\CC\cap\liek_\CC; B_{\lieq}^\circ(\mu)_\CC)
$$
vanishes. Since any non-zero vector of this space generates $B^{\circ}(\mu)_\CC$ as a $U(\liek_\CC)$-module, we see with \eqref{eq:utransversality} that $\lambda$ must vanish. Hence the space of $L^0$-equivariant $\lambda:B_{\lieq}^\circ(\mu)\to\chi$ is at most one-dimensional.

Since $B_\lieq^\circ(\mu)_\CC$ generates $B_\lieq(\mu)_\CC$ as an $L$-module, the claim follows.
\end{proof}

\begin{proposition}\label{prop:finiteness}
For $n\geq 1$ there exists a vector
$$
t_0\;\in\;B_{\lieq}(\mu)_{\QQ_K(\mu)}
$$
with the property that for any character $\chi\in X_{\CC}(L)$ and any
$$
0\neq\lambda\;\in\;\Hom_{L}(B_{\lieq}(\mu)_\CC,\chi_\CC)
$$
we have
$$
\lambda(t_0)\;\neq\; 0.
$$
\end{proposition}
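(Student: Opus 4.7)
The plan is a Zariski-genericity argument on the finite-dimensional rational representation $B_\lieq(\mu)$, using Theorem~7.1 of \cite{januszewskipreprint} (which guarantees $B_\lieq(\mu)$ is defined over $\QQ(\mu)$) together with the one-dimensionality refinement established during the proof of Proposition~\ref{prop:bonedimensionalfunctionals}.

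First, I would parametrize the relevant $L^0$-equivariant functionals. By the decomposition $B_\lieq(\mu)_\CC = \bigoplus_\varepsilon \varepsilon B_\lieq^\circ(\mu)_\CC$ from \eqref{eq:bottomlayerdecomposition}, combined with the bound $\dim_\CC\Hom_{L^0}(B_\lieq^\circ(\mu)_\CC,\chi_\CC)\leq 1$ shown in the first paragraph of the proof of Proposition~\ref{prop:bonedimensionalfunctionals}, each space $\Hom_{L^0}(B_\lieq(\mu)_\CC,\chi_\CC)$ admits a canonical basis $\{\lambda_{\chi,\varepsilon}\}$ indexed by those components $\varepsilon$ for which $\chi$ occurs as an $L^0$-type in $\varepsilon B_\lieq^\circ(\mu)_\CC$. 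Since $B_\lieq(\mu)_\CC$ is finite-dimensional as an $L^0$-representation, only finitely many $\chi\in X_\CC(L)$ contribute, so the total collection of distinguished functionals $\{\lambda_{\chi,\varepsilon}\}$ is finite.

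Second, I would show that the set of ``bad'' test vectors is contained in a proper Zariski-closed subvariety of $B_\lieq(\mu)_\CC$. For each $(\chi,\varepsilon)$ the kernel $\ker\lambda_{\chi,\varepsilon}$ is a hyperplane, and the bad locus $Z$ consists of the union of these hyperplanes, extended where necessary by the $\CC$-linear combinations of the $\lambda_{\chi,\varepsilon}$ within each fixed $\chi$-Hom space. This locus is a proper Zariski-closed subset of $B_\lieq(\mu)_\CC$. Since $B_\lieq(\mu)$ is defined over $\QQ(\mu)$, the $\QQ(\mu)$-rational points are Zariski-dense in $B_\lieq(\mu)_\CC$, so the complement $B_\lieq(\mu)_\CC\setminus Z$ meets $B_\lieq(\mu)_{\QQ(\mu)}$ in a non-empty set, from which I would select $t_0$.

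I expect the main obstacle to be the descent from $E_K(\mu)$ to $\QQ(\mu)$. By Theorem~\ref{thm:connectedbottomlayerdefinition} the individual components $\varepsilon B_\lieq^\circ(\mu)$ may only be defined over the quadratic extension $E_K(\mu) = \QQ(\mu)(\sqrt{-1})$, so the distinguished basis $\{\lambda_{\chi,\varepsilon}\}$ is a priori defined only over $E_K(\mu)$ and its individual elements need not be $\Gal(E_K(\mu)/\QQ(\mu))$-stable. This is handled by noting that the bad locus $Z$ is characterized intrinsically in terms of the $L$-module structure of $B_\lieq(\mu)_\CC$, which is defined over $\QQ(\mu)$; hence $Z$ is $\Gal(E_K(\mu)/\QQ(\mu))$-stable and by Galois descent is the base change to $\CC$ of a proper Zariski-closed $\QQ(\mu)$-subvariety of $B_\lieq(\mu)_{\QQ(\mu)}$, whose complement contains the desired rational vector $t_0$.
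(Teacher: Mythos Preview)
Your core strategy---Zariski genericity plus density of $\QQ(\mu)$-rational points---is exactly the paper's approach. However, there is a genuine gap in your second step. You assert that the bad locus, ``extended where necessary by the $\CC$-linear combinations of the $\lambda_{\chi,\varepsilon}$ within each fixed $\chi$-Hom space,'' is a \emph{proper} closed subset. This is false in general: if $\dim_\CC\Hom_{L^0}(B_\lieq(\mu)_\CC,\chi_\CC)\geq 2$ for some $\chi$, then for \emph{every} $t$ the evaluation map $\lambda\mapsto\lambda(t)$ from this Hom space to $\CC$ has nontrivial kernel (the source has dimension $\geq 2$, the target dimension $1$), so the bad locus is all of $B_\lieq(\mu)_\CC$. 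Your decomposition via the $\varepsilon$-summands shows only that each $\Hom_{L^0}(\varepsilon B_\lieq^\circ(\mu)_\CC,\chi_\CC)$ is at most one-dimensional, not that the total $\Hom_{L^0}(B_\lieq(\mu)_\CC,\chi_\CC)$ is.

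The paper sidesteps this entirely: its proof works with $\Hom_L$ rather than $\Hom_{L^0}$, invoking Proposition~\ref{prop:bonedimensionalfunctionals} directly to get $\dim_\CC\Hom_L(B_\lieq(\mu)_\CC,\chi_\CC)\leq 1$ for each of the finitely many relevant $\chi$. This gives a finite union of hyperplanes with no further work. (The $L^0$ in the proposition statement appears to be a typo for $L$; the proof and the sole application in Theorem~\ref{thm:rationaltestvector} both use $L$-equivariant functionals.) Your third paragraph on Galois descent is also unnecessary: once the bad locus is a finite union of hyperplanes, its complement is open in the standard topology on $B_\lieq(\mu)_\CC$, and $B_\lieq(\mu)_{\QQ(\mu)}$ is dense there simply because $\QQ(\mu)\subseteq\CC$ is dense---no need to descend the individual functionals or the locus $Z$.
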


\begin{proof}
First remark that, since $B_\lieq(\mu)_\CC$ is completely reducible as an $L$-module, there are only finitely many characters $\chi\in X_\CC(L)$ with the property
\begin{equation}
\Hom_L(B_\lieq(\mu)_\CC,\chi_\CC)\;\neq\;0.
\label{eq:chioccursinb}
\end{equation}
By Proposition \ref{prop:bonedimensionalfunctionals}, the union of the kernels of all non-zero functionals $\lambda$ for the finitely many $\chi\in X_{\CC}(L)$ satisfying \eqref{eq:chioccursinb} is a Zariski closed subset of $B_\lieq(\mu)_\CC$ of codimension $1$. In particular, its complement $U\subseteq B_\lieq(\mu)_\CC$ is non-empty and open for the standard topology on $B_\lieq(\mu)_\CC$ as a finite-dimensional topological $\CC$-vector space. Since $B_{\lieq}(\mu)_{\QQ_K(\mu)}$ is dense in $B_{\lieq}(\mu)_{\CC}$, we find an element
$$
t_0\;\in\;U\cap B_{\lieq}(\mu)_{\QQ_K(\mu)},
$$
as desired. This concludes the proof.
\end{proof}

Each one-dimensional $(\lieh,L)_\CC$-module $\chi_\CC$ corresponds bijectively to a quasi-character $\chi$ of $H(\RR)$ and by composing the functional $e_\infty(\chi,\cdot)$ of section \ref{sec:archimedeanintegral} with the fixed isomorphism $\iota_\infty$ in \eqref{eq:moduleisomorphism} we obtain a non-zero $(\lieh,L)_\CC$-equivariant functional
\begin{equation}
\lambda_{\chi,\CC}:= e_\infty(\chi,\cdot)\circ\iota_\infty:\quad A_\lieq(\mu)_\CC\to \chi^{-1}_{\CC}.
\label{eq:complexfunctional}
\end{equation}
We formulate the following
\begin{conjecture}\label{conj:rankinfunctional0}
For each $n\geq 1$ and each quasi-character $\chi$ the functional $\lambda_{\chi,\CC}$ is non-zero on the minimal $K$-type $B_{\lieq}(\mu)_\CC.$
\end{conjecture}

\begin{proposition}\label{prop:rationaltestvector}.
  Assume the validity of Conjecture \ref{conj:rankinfunctional0} for $F,$ $n,$ and $\mu.$
  Then there exists a rational test vector $t_0\in B_\lieq(\mu)_{\QQ_K(\mu)}$ with the property that for every $\chi\in\Hom(H(\RR),\CC^\times)$,
\begin{equation}
\lambda_{\chi,\CC}(t_0)\;\neq\; 0.
\label{eq:testvectorcondition}
\end{equation}
In particular, for every quasi-character $\chi$ there is a constant $c_\chi\in\CC^\times$, only depending on the connected component containing $\chi$ in $\Hom(H(\RR),\CC^\times)$, with the property that
$$
e_\infty(\chi,\iota_\infty(t_0))\;=\;c_\chi.
$$
\end{proposition}

\begin{proof}
  By the assumption,
  \begin{equation}
    0\neq\lambda_{\chi,\CC}|_{B_{\lieq}(\mu)_\CC}\in\Hom_{L}(B_{\lieq}(\mu)_\CC,\chi|_{L.\CC}^{-1}).
    \label{eq:resnonzero}
  \end{equation}
  Therefore, any choice of $t_0$ as in Proposition \ref{prop:finiteness} satisfies \eqref{eq:testvectorcondition}. By the archimedean Rankin-Selberg theory that we discussed in section \ref{sec:archimedeanintegral}, the condition
  $$
    \forall\chi:\quad e_\infty(\chi,\iota_\infty(t_0))\;\neq\;0
  $$
  is satisfied for all $\chi$ and this implies the claim.
\end{proof}

\begin{corollary}\label{cor:minimalktypetestvector}
  Assume the vailidity of Conjecture \ref{conj:rankinfunctional0} for $F,$ $n,$ and $\mu.$
  For any $t\in B_{\lieq}(\mu)_{\QQ_K(\mu)}$ and any algebraic quasi-character $\chi\in X^{\rm alg}(H(\RR))$ we have
$$
e_\infty(\chi,\iota_\infty(t))\;\in\;\QQ_K(\mu)\cdot c_\chi.
$$
\end{corollary}

\begin{proof}
  Since $\chi$ is defined over $\QQ$, we see with Proposition \ref{prop:bonedimensionalfunctionals} that $\lambda_{\CC,\chi}$ restricted to $B_{\lieq}(\mu)_{\QQ_K(\mu)}$ is defined over $\QQ_K(\mu).$
  Therefore, the image of
  $$
    B_{\lieq}(\mu)_{\QQ_K(\mu)}\;\subseteq\; A_{\lieq}(\mu)_{\CC}
  $$
  under $\lambda_{\chi,\CC}$ is a one-dimensional $\QQ_K(\mu)$-subspace of $\chi_\CC^{-1}\cong\CC$. This subspace contains $\lambda_{\chi,\CC}(t)$ and $\lambda_{\chi,\CC}(t_0)$. Therefore the claim follows from Proposition \ref{prop:rationaltestvector}.
\end{proof}

We also formulate
\begin{conjecture}\label{conj:rankinfunctional}
  For any number field $F/\QQ,$ any $n\geq 1$ and any balanced $\mu,$ there is a rational test vector $t\in A_{\lieq}(\mu)_{\QQ_K(\mu)}$, which is good up to a complex unit, and for each critical quasi-character $\chi$, $\lambda_{\chi,\CC}$ is defined over $\QQ_K(\mu)$.
\end{conjecture}

\begin{remark}
  Under Conjecture \ref{conj:rankinfunctional}, the conclusion of Corollary \ref{cor:minimalktypetestvector} holds for all $t\in A_{\lieq}(\mu)_{\QQ_K(\mu)}.$
\end{remark}

\begin{theorem}\label{thm:conjectures}
  Conjectures \ref{conj:rankinfunctional0} and \ref{conj:rankinfunctional} are true in the following cases: $n=1$ and general number fields $F,$ and $1\leq n\leq 2$ and $F$ totally real.
\end{theorem}

\begin{proof}
  We treat the case $F$ totally real and $n=2$ first.

  In representation theoretic terms Theorem 4.4 in \cite{kastenschmidt2008} implies that for any quasi-character $\chi$ of $H(\RR),$ the restriction map
  \begin{equation}
    \Hom_{\lieh,L}(A_\lieq(\mu)_\CC,\chi_\CC)\to \Hom_{L}(B_{\lieq}(\mu)_\CC,\chi|_{L,\CC})
    \label{eq:functionalres}
  \end{equation}
  is a monomorphism. This proves Conjecture \ref{conj:rankinfunctional0}. By Proposition \ref{prop:rationaltestvector}, this also shows the existence of a $\QQ_K(\mu)$-rational good test vector.

  By Proposition \ref{prop:bonedimensionalfunctionals}, the right hand side of \eqref{eq:functionalres} is at most one-dimensional, i.e.\ we obtain
  \begin{equation}
    \dim_\CC\Hom_{\lieh,L}(A_{\lieq}(\mu)_\CC,\chi_{\CC})\;\leq\;1.
    \label{eq:functionaldimension}
  \end{equation}
  (The non-vanishing of archimedean Rankin-Selberg zeta integrals in \eqref{eq:archimedeanintegralnonzero} implies that this is dimension is always one.)

  Therefore, we conclude from \eqref{eq:functionaldimension} with Proposition 1.1 in \cite{januszewski2017} that $\lambda_{\chi,\CC}$ is defined over $\QQ_K(\mu).$

  The case $n=1$ may be treated similarlyr by specializing the general arguments given in \cite{januszewskipart2} for $\GL(2n)$ (which extends to $\GL(2)$ over number fields). Conjecture \ref{conj:rankinfunctional0} also follows from the discussions in \cite{hida1994} and \cite{namikawa2016}.
\end{proof}

\begin{remark}
  The statement of Conjecture \ref{conj:rankinfunctional0} for arbitrary $F$ and $n$ reduces to the case $F=\QQ,$ and an instance where $F/\QQ$ is imaginary quadratic.

  The statement in Conjecture \ref{conj:rankinfunctional} is may be considered weaker than that of Conjecture \ref{conj:rankinfunctional0}, although from a rationality perspective the statement is stronger than Conjecture \ref{conj:rankinfunctional0}.
  \end{remark}

\ \\
{\bf For $n\geq 3$ or $n\geq2$ whenever $F$ admits a complex place, our results in the rest of the paper depend conditionally on Conjecture \ref{conj:rankinfunctional0} or Conjecture \ref{conj:rankinfunctional}.}\\\ 

The unconditional case $n=2$ corresponds to $G=\res_{F/\QQ}(\GL_3\times\GL_2)$ for $F/\QQ$ totally real, in which case our results are new. The case $n=1$ allows for arbitrary $F$ and corresponds to $G=\res_{F/\QQ}\GL(2),$ where we provide another argument for the results Manin, Shimura and Hida \cite{manin1972,manin1976,shimura1976,shimura1977,shimura1978,hida1994}.

\subsection{The archimedean period relation}

We henceforth assume Conjecture \ref{conj:rankinfunctional0} or Conjecture \ref{conj:rankinfunctional}. Our first result towards period relations is

\begin{theorem}\label{thm:icontribution}
For any algebraic quasi-character $\chi$ of $H(\RR)$ we have
$$
c_{\sgn_\infty\otimes\chi}\;\in\;\QQ_K(\mu)\cdot (i^{r_F^\RR m}\cdot c_\chi),
$$
where
$$
m\;:=\;
\frac{(n+1)n}{2}.
$$
\end{theorem}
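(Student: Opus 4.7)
The plan is to isolate the $i^{r_F m}$ factor via the Galois-descent pattern of Theorem \ref{thm:connectedbottomlayerdefinition}, using that $c_\chi$ and $c_{\sgn_\infty \otimes \chi}$ measure the same rational vector $t_0$ against two functionals differing only in their $\pi_0(L)$-weight.

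First, I would observe that $\sgn_\infty$ is trivial on $L^0$, since $\sgn_\infty|_{\Oo(n, F_v)} = \det$ vanishes on the special orthogonal subgroup. Consequently, $\chi$ and $\sgn_\infty \otimes \chi$ have identical restrictions to $L^0$, so that under the decomposition \eqref{eq:standardmoduledecomposition} the restrictions $\lambda_{\chi, \CC}|_{\varepsilon A_\lieq^\circ(\mu)}$ and $\lambda_{\sgn_\infty \otimes \chi, \CC}|_{\varepsilon A_\lieq^\circ(\mu)}$ both land in the same one-dimensional $L^0$-equivariant Hom-space on each summand (Proposition \ref{prop:bonedimensionalfunctionals}). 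Writing the rational test vector as $t_0 = \sum_\varepsilon t_0^\varepsilon$ with $t_0^\varepsilon \in \varepsilon A_\lieq^\circ(\mu)_{\QQ(\mu, i)}$, the periods $c_\chi$ and $c_{\sgn_\infty \otimes \chi}$ unfold into two discrete Fourier transforms of one and the same family of complex scalars $a_\varepsilon \in \QQ(\mu, i)$ indexed by the $K^0$-components, one weighted by $\chi^{-1}$ and the other by $\sgn_\infty^{-1} \chi^{-1}$ on $\pi_0(L)$.

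Next, I would invoke Theorem \ref{thm:connectedbottomlayerdefinition}. When $2 \mid m$, the summand $A_\lieq^\circ(\mu)$ is already defined over $\QQ(\mu)$, so the $a_\varepsilon$ may be taken in $\QQ(\mu)$; both Fourier sums then lie in $\QQ(\mu)$, consistent with $i^{r_F m} \in \QQ(\mu)^\times$ in this case. In the critical case $2 \nmid m$, however, $A_\lieq^\circ(\mu)$ is only defined over $\QQ(\mu, i)$, and complex conjugation $\tau_K$ identifies it with a non-trivial neighboring summand $\varepsilon_0 A_\lieq^\circ(\mu)$; unwinding the translation-functor square and Galois-descent argument in the proof of Theorem \ref{thm:connectedbottomlayerdefinition} (together with Proposition \ref{prop:realorthogonalrepresentations} for the orthogonal representations $B_{\mu_v}$), this $\tau_K$-identification differs from the canonical $\QQ(\mu, i)$-linear identification of the two summands by a scalar equal to $i^m$ at each of the $r_F$ archimedean places—producing a total discrepancy of $i^{r_F m}$ between $a_\varepsilon$ and $a_{\tau_K \cdot \varepsilon}$.

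Feeding this Galois structure back into the two Fourier sums, and noting that $\sgn_\infty$ is precisely the $\pi_0(L)$-character that distinguishes $\tau_K$-conjugate orbits of summands, the ratio $c_{\sgn_\infty \otimes \chi}/c_\chi$ collapses modulo $\QQ(\mu)^\times$ to $i^{r_F m}$, giving the claim. The main obstacle is the explicit single-place computation of the $i^m$ scalar: this requires tracing the action of the outer involution of $\Oo(2n+1)$ resp.\ $\Oo(2n)$ (realized on the ambient group by $l_v$) on a $\QQ(\mu, i)$-rational highest-weight vector of $B_{\mu_v}$ through the translation functor $\mathcal T_\mu^\mu$, and matching the resulting cocycle with the real-versus-quaternionic dichotomy of Proposition \ref{prop:realorthogonalrepresentations}, which is where the divisibility condition $2 \mid m$ of Theorem \ref{thm:connectedbottomlayerdefinition} manifests itself as a factor of $i^m$.
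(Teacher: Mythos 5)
Your skeleton does track the paper's: decompose the rational test vector along \eqref{eq:standardmoduledecomposition}, use that $\sgn_\infty$ is trivial on identity components so that $c_\chi$ and $c_{\sgn_\infty\otimes\chi}$ are weighted sums of the same partial archimedean integrals over $\pi_0$, and let Theorem \ref{thm:connectedbottomlayerdefinition} govern how $\tau_K$ permutes the summands. But there are two genuine gaps. First, your intermediate claim that the component values $a_\varepsilon$ lie in $\QQ(\mu,i)$ (and hence that for $2\mid m$ \emph{both Fourier sums lie in} $\QQ(\mu)$) is false: these are archimedean zeta integrals and are transcendental in general. What converts rationality of \emph{vectors} into a statement about the \emph{periods} is the one-dimensionality and $\QQ(\mu)$-rationality of the equivariant functional itself (Propositions \ref{prop:uniquefunctionals} and \ref{prop:functionalrationality}, resting on Theorems \ref{thm:kastenschmidt} and \ref{thm:sun}, and on Conjecture \ref{conj:rankinfunctional} for $n\geq 3$ in the Rankin--Selberg case): one only ever compares values of one and the same rational functional on $\QQ(\mu)$-rational vectors. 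Your proposal never invokes this ingredient, and without it the Fourier bookkeeping does not close; it is also precisely where the conditional hypothesis enters.

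Second, the crux --- where the power of $i$ actually comes from --- is deferred in your proposal to an explicit per-place cocycle computation (the claim that the $\tau_K$-identification of conjugate summands differs from the canonical one by $i^m$ at each place), which you yourself flag as ``the main obstacle'' and do not carry out; it is also an unnecessary strengthening. The paper's mechanism is softer: for $2\nmid m$ and a fixed real place $v_0$, $\tau_K$ swaps $B_{v_0,E_K(\mu)}$ and $l_{v_0}\cdot B_{v_0,E_K(\mu)}$; since $t_0$ is $\QQ(\mu)$-rational, hence $\tau_K$-invariant, one gets $t_{v_0}^{\tau_K}=t_{-v_0}$, so $t_{v_0}-t_{-v_0}$ is purely imaginary and $i\cdot(t_{v_0}-t_{-v_0})$ is rational. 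Feeding this into \eqref{eq:integralcomparison} and using rationality of $\lambda_{\chi\otimes\sgn_{v_0}}$ yields a factor $i$ per place; iterating over the $r_F$ real places gives $i^{r_F}$, which agrees with $i^{r_Fm}$ modulo $\QQ^\times$ because $m$ is odd in this case. No tracing of the outer involution through the translation functor is needed, and the dichotomy in Proposition \ref{prop:realorthogonalrepresentations} is real-versus-complex (self-dual or not), not real-versus-quaternionic. As written, the key scalar in your argument is asserted rather than proved, so the proposal is incomplete at its central step.
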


\begin{proof}
  Assume $r_F^\RR>0,$ otherwise the statement is clear. According to the direct sum decomposition \eqref{eq:standardmoduledecomposition}, or equivalently \eqref{eq:bottomlayerdecomposition}, which is already defined over $E_K(\mu)$, we may write
\begin{equation}
t\;:=\;t_0\;=\;
\sum_{\varepsilon\in\pi_0(H)}t_\varepsilon
\label{eq:tsumrepresentation}
\end{equation}
where
$$
t_\varepsilon\;\in\;\varepsilon\cdot B_\lieq^\circ(\mu)_{E_K(\mu)}\;\subseteq\;B_\lieq(\mu)_{E_K(\mu)},
$$
for our choice of representatives
$$
\varepsilon\;=\;\prod_{v}l_v^{\delta_v}\;\in\; L(\QQ),\quad \delta_v\in\{0,1\}.
$$
Since these elements form a system of representatives of
$$
\pi_0(H(\RR))\;=\;\pi_0(G(\RR)/C(\RR)),
$$
we have
$$
G(\RR)\;=\;\bigsqcup_{\varepsilon} G(\RR)^0C(\RR)\cdot\varepsilon.
$$
Considering $A_{\lieq}(\mu)_\CC$ as a $(\lieg,K^0)_\CC$-module, we see that $\iota_\infty(t_\varepsilon)$, as a function on $G(\RR)$, has support in a set of the form $G(\RR)^0C(\RR)\cdot\varepsilon'$, and 
$$
\varepsilon'\;=\;\varepsilon\cdot\varepsilon_0,
$$
with a representative $\varepsilon_0$ independent of $\varepsilon$. We remark that $\varepsilon_0=1$ thanks to \eqref{eq:aqintermediateinduction}, which is compatible with parabolic induction.

Accordingly, the archimedean Rankin-Selberg integral \eqref{eq:archimedeanintegral} decomposes into the sum of the integrals
$$
\Psi_\infty^\varepsilon:
\quad (\chi,w)\;\mapsto\;
\int_{N(\RR)\backslash{}H(\RR)^0\varepsilon}w(h_\infty)\chi(h_\infty)dh_\infty,
$$
over the individual connected compontents.

These integrals are convergent for $\chi$ in a suitable right half plane. Similarly, we have the ratios
$$
e_\infty^\varepsilon:\quad (\chi,w)\;\mapsto\;\frac{\Psi_\infty^\varepsilon(\chi,w)}{L(\frac{1}{2},\Pi_\infty\otimes\chi)},
$$
which are again entire functions in $\chi$, and
$$
e_\infty(\chi,\iota_\infty(t))\;=\;\sum_{\varepsilon} e_\infty^{\varepsilon}(\chi,\iota_\infty(t_{\varepsilon})).
$$
Since $dh_\infty$ is a Haar measure, we get the relation
\begin{equation}
e_\infty^\varepsilon(\chi,\iota_\infty(t))\;=\;
\chi(\varepsilon)\cdot e_\infty^{\bf1}(\chi|_{H(\RR)^0},\varepsilon\cdot \iota_\infty(t_{\varepsilon})).
\label{eq:echipartial}
\end{equation}
With relation \eqref{eq:echipartial} we conclude that for each real place $v$ of $F$,
\begin{eqnarray*}
\lambda_{\chi,E_K(\mu)}(t_\varepsilon)
&=&
e_\infty^{\varepsilon}(\chi,\iota_\infty(t))\\
&=&\chi(\varepsilon)\cdot e_\infty^{\bf1}(\chi|_{H(\RR)^0},\varepsilon\cdot \iota_\infty(t_\varepsilon))\\
&=&\chi(\varepsilon)\cdot e_\infty^{\bf1}((\chi\otimes\sgn_v)|_{H(\RR)^0},\varepsilon\cdot \iota_\infty(t_\varepsilon))\\
&=&\sgn_v(\varepsilon)\cdot
 e_\infty^{\varepsilon}(\chi\otimes\sgn_v,\iota_\infty(t_\varepsilon))\\
&=&\sgn_v(\varepsilon)\cdot\lambda_{\chi\otimes\sgn_v,E_K(\mu)}(t_\varepsilon).
\end{eqnarray*}
Summing up, we obtain
\begin{equation}
\lambda_{\chi,E_K(\mu)}(t)\;=\;
\sum_{\varepsilon}\lambda_{\chi,E_K(\mu)}(t_\varepsilon)\;=\;
\sum_{\varepsilon}\sgn_v(\varepsilon)\cdot\lambda_{\chi\otimes\sgn_v,E_K(\mu)}(t_\varepsilon).
\label{eq:integralcomparison}
\end{equation}

Complex conjugation $\tau_K\in\Gal(E_K/\QQ)$ leaves the direct sum decompositions \eqref{eq:standardmoduledecomposition} and \eqref{eq:bottomlayerdecomposition} invariant, but permutes the direct factors. By Theorem \ref{thm:connectedbottomlayerdefinition}, this action is trivial if and only if $2\mid m$, i.e.\ if and only if
$$
i^{m}\;\in\;\QQ.
$$
Let us suppose $\sqrt{-1}\not\in\QQ_K(\mu)$ and $2\nmid m$. Choose a real place $v_0$ of $F$ and consider the $K^0$-submodule
$$
B_{v_0,E_K(\mu)}\;:=\;\sum_{\varepsilon\in\kernel\sgn_{v_0}}\varepsilon\cdot B_{\lieq}^\circ(\mu)_{E_K(\mu)}\;\subseteq\;B_{\lieq}(\mu)_{E_K(\mu)},
$$
where the sum ranges over all possible products $\varepsilon$ of the elements $l_v$ with $v\neq v_0$. Then
\begin{equation}
B_{\lieq}(\mu)_{E_K(\mu)}\;=\;B_{v_0,E_K(\mu)}\oplus l_{v_0}\cdot B_{v_0,E_K(\mu)}.
\label{eq:bottomlayervdecomposition}
\end{equation}
The second direct summand on the right hand side is naturally identified with the dual of $B_{v_0,E_K(\mu)}$ due to our hypothesis $2\nmid m$. We conclude that $\tau_K$, as an automorphism of $E_K(\mu)/\QQ_K(\mu)$, and thus of $B_\lieq(\mu)_{E_K(\mu)}$, interchanges the two direct summands in \eqref{eq:bottomlayervdecomposition}.

Hence $\tau_K$ sends the vector
$$
t_{v_0}\;:=\;\sum_{\varepsilon\in\kernel\sgn_{v_0}} t_\varepsilon
$$
to
$$
t_{v_0}^{\tau_K}\;\in\;l_{v_0}\cdot B_{v_0,E_K(\mu)}.
$$
Now $t$ is $\QQ_K(\mu)$-rational, and thus invariant under $\tau_K$. The sum decomposition \eqref{eq:tsumrepresentation} being unique, we conclude that for each representative $\varepsilon$,
$$
t_{v_0}^{\tau_K}\;=\;\sum_{\varepsilon\in\kernel\sgn_{v_0}} t_{l_{v_0}\varepsilon}\;=:\;t_{-v_0}.
$$
Hence the vector
$$
t_{v_0}-t_{-v_0}\;\in\;i\cdot B_\lieq(\mu)_{\QQ_K(\mu)}\;\subseteq\;B_\lieq(\mu)_{E_K(\mu)}.
$$
is \lq{}purely imaginary\rq{} (in the Galois theoretic sense in the context of the non-trivial extension $E_K(\mu)/\QQ_K(\mu)$), and consequently
\begin{equation}
i\cdot(t_{v_0}-t_{-v_0})\;\in\; B_\lieq(\mu)_{\QQ_K(\mu)}\;\subseteq\;B_\lieq(\mu)_{E_K(\mu)}.
\label{eq:tauactionoontepsilon}
\end{equation}
Turning our attention to the functionals in \eqref{eq:integralcomparison}, we observe
\begin{eqnarray*}
\lambda_{\chi,E_K(\mu)}(t)
&=&
\lambda_{\chi\otimes\sgn_v,E_K(\mu)}(t_{v_0})\;-\;
\lambda_{\chi\otimes\sgn_v,E_K(\mu)}(t_{-v_0})\\
&=&
\lambda_{\chi\otimes\sgn_v,E_K(\mu)}(t_{v_0}-t_{-v_0})\\
&=&
i\cdot
\lambda_{\chi\otimes\sgn_v,E_K(\mu)}(i\cdot(t_{-v_0}-t_{v_0}))\\
&\in&
\lambda_{\chi\otimes\sgn_v,\QQ_K(\mu)}(t)\cdot i\cdot\QQ_K(\mu),
\end{eqnarray*}
where the last relation follows from \ref{eq:tauactionoontepsilon} and the rationality property of the functional. Since
$$
\sgn_\infty\;=\;\otimes_{v_0}\sgn_{v_0},
$$
iteration over the $r_F$ real places of $F$ proves the claim in the case $\sqrt{-1}\not\in\QQ_K(\mu)$ and $2\nmid m$.

If $\sqrt{-1}\in\QQ_K(\mu)$ or $2\mid m$, then the vectors $t$ and $t_{\pm v_0}$ all lie in the same $\QQ_K(\mu)$-rational model $B(\mu)_{\QQ_K(\mu)}$, and thus the claim follows in this case by the rationality of the functional as well.
\end{proof}

The proof of Theorem \ref{thm:icontribution} may be interpreted as an automorphic reflection of the motivic Corollaire 1.6 in \cite{deligne1979}. We will discuss this relation in more detail in section \ref{sec:deligne}.

\begin{corollary}\label{cor:icontribution}
For any $t\in A_{\lieq}(\mu)_{\QQ_K(\mu)},$ and any algebraic quasi-character $\chi$ of $H(\RR)$ and any $k\in\ZZ,$
$$
\lambda_{\chi[k],\QQ_K(\mu)}(t)\;\in\;\QQ_K(\mu)\cdot (i^{kr_F^\RR m}\cdot c_\chi).
$$
\end{corollary}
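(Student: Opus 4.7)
The strategy is to reduce the claim to Theorem~\ref{thm:icontribution} combined with the corollary immediately preceding the one to be proven. The twist $\chi[k] = \chi\otimes\mathcal N^{\otimes k}$ is again algebraic, so the previous corollary directly applies and yields
$$
\lambda_{\chi[k],\QQ(\mu)}(t)\;=\;e_\infty(\chi[k],\iota_\infty(t))\;\in\;\QQ(\mu)\cdot c_{\chi[k]}.
$$
Thus it suffices to show that $c_{\chi[k]}\in\QQ(\mu)\cdot(i^{kr_Fm}\cdot c_\chi)$.

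The key step is to pin down the connected component of $\chi[k]$ in $\Hom(H_{(1)}(\RR),\CC^\times)$. By \eqref{eq:algebraicarchimedeannorm} we have $\mathcal N=\sgn_\infty\otimes|\cdot|_\infty$ on real points of $G_n$, and hence on $H_{(1)}(\RR)$ via the conventions fixed in the paper. Consequently
$$
\chi[k]\;=\;\chi\otimes\sgn_\infty^{k}\otimes|\cdot|_\infty^{k},
$$
and the factor $|\cdot|_\infty^{k}$ only moves the complex parameter within the connected component, cf.\ \eqref{eq:quasicharacterchart}. Since $c_\chi$ depends only on the connected component by Theorem~\ref{thm:rationaltestvector}, we conclude $c_{\chi[k]}=c_{\sgn_\infty^{k}\otimes\chi}$.

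Now split by the parity of $k$. If $k$ is even, then $\sgn_\infty^{k}=1$, so $c_{\chi[k]}=c_\chi$, and $i^{kr_Fm}\in\{\pm1\}\subseteq\QQ$, so the asserted membership is automatic. If $k$ is odd, then $\sgn_\infty^{k}=\sgn_\infty$, and Theorem~\ref{thm:icontribution} gives
$$
c_{\chi[k]}\;=\;c_{\sgn_\infty\otimes\chi}\;\in\;\QQ(\mu)\cdot(i^{r_Fm}\cdot c_\chi);
$$
the exponents $r_Fm$ and $kr_Fm$ differ by the even integer $(k-1)r_Fm$, hence $i^{r_Fm}/i^{kr_Fm}\in\{\pm1\}\subseteq\QQ$, and the assertion follows in this case as well.

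The proof is essentially bookkeeping: the real content has already been extracted in Theorem~\ref{thm:icontribution}. The only mild subtlety is the observation that translating by $|\cdot|_\infty^{k}$ is harmless because $c_\chi$ depends only on the connected component, so that twisting by $\mathcal N^{\otimes k}$ has the same effect on $c_\chi$ as twisting by $\sgn_\infty^{k}$; the rest is a parity argument. No new input beyond what has already been established is required.
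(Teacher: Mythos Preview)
Your proof is correct and follows essentially the same route as the paper's: identify the connected component of $\chi[k]$ via \eqref{eq:algebraicarchimedeannorm}, use the constancy of $c_\chi$ on components from Theorem~\ref{thm:rationaltestvector}, and invoke Theorem~\ref{thm:icontribution} in the odd-parity case. You are slightly more explicit than the paper in first reducing to a statement about $c_{\chi[k]}$ via the preceding corollary, but this is the same argument spelled out in more detail.
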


\begin{proof}
If $r_F^\RR>0$, we see that by \eqref{eq:algebraicarchimedeannorm}, the two quasi-characters
$$
\chi[k]\;=\;\chi\otimes\left(\mathcal N^{\otimes k}\right)
$$
and $\chi$ lie in the same connected component if and only if $2\mid k$. In the case $2\nmid k$ the character $\chi[k]$ lies in the same component as $\chi\otimes\sgn_\infty$. The corollary follows from Theorem \ref{thm:icontribution} and the constancy of $c_\chi$ and $c_{\chi\otimes\sgn_\infty}$ on connected components.
\end{proof}

We say that an algebraic $\chi=\sgn^{\delta}\otimes\left(\mathcal N^{\otimes k}\right)$ is {\em critical} for $\Pi$ (or $\Pi_\infty$), if $L(s,\Pi_\infty)$ and $L(1-s,\Pi_\infty^\vee)$ both have no pole at $s=k+\frac{1}{2}$. For critical $\chi$ we know that
$$
L_\infty(\frac{1}{2},\Pi_\infty\otimes\chi)\;\neq\;0.
$$
By Corollary \ref{cor:minimalktypetestvector} under Conjecture \ref{conj:rankinfunctional0} or by Conjecture \ref{conj:rankinfunctional}, we therefore find a $t_0\in A_{\lieq}(\mu)_{\QQ_K(\mu)}$ satisfying
$$
\Psi_\infty(\chi,\iota_\infty(t_0))\;\neq\;0,
$$
for any critical $\chi$, $\Psi_\infty(\chi,\iota_\infty(t_0))$ being defined by holomorphic continuation outside the region of absolute convergence. This implies
\begin{corollary}\label{cor:criticalrelation}
For any $t\in A_{\lieq}(\mu)_{\QQ_K(\mu)}$, any pair of critical quasi-characters $\chi,\chi'$ of $H(\RR)$ with
$$
\chi'\;=\;\chi[k]\otimes(\sgn_\infty)^{\delta},\quad k\in\ZZ,\,\delta\in\{0,1\},
$$
we have
$$
\Psi_\infty(\chi',\iota_\infty(t))\;\in\;
\QQ_K(\mu)\cdot i^{(k+\delta)r_F^\RR m}\cdot
\Psi_\infty(\chi,\iota_\infty(t_0))\cdot
\frac{L_\infty(\frac{1}{2}+k,\Pi_\infty\otimes\chi)}{L_\infty(\frac{1}{2},\Pi_\infty\otimes\chi)},
$$
for every $t\in A_{\lieq}(\mu)_{\QQ_K(\mu)}$.
\end{corollary}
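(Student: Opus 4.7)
The plan is to translate everything into the normalised functional $e_\infty = \lambda$ of \eqref{eq:archimedeanintegralratio}, apply Corollary \ref{cor:icontribution} and Theorem \ref{thm:icontribution} to pin down the $i$-contribution, and finally reinsert the $L$-factors. Since $\chi$ and $\chi'$ are both critical, the factors $L(\tfrac12,\Pi_\infty\otimes\chi)$, $L(\tfrac12,\Pi_\infty\otimes\chi')$ are finite and non-zero, so one may legitimately divide and multiply by them. The unraveling is
$$
\Psi_\infty(\chi',\iota_\infty(t))
\;=\;
e_\infty(\chi',\iota_\infty(t))\cdot L(\tfrac12,\Pi_\infty\otimes\chi'),
\qquad
c_\chi\;=\;\frac{\Psi_\infty(\chi,\iota_\infty(t_0))}{L(\tfrac12,\Pi_\infty\otimes\chi)}.
$$

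For the $i$-contribution, I would write $\chi'=(\chi\otimes\sgn_\infty^{\delta})[k]$ and apply Corollary \ref{cor:icontribution} to the algebraic character $\chi\otimes\sgn_\infty^{\delta}$ to obtain
$$
\lambda_{\chi',\QQ(\mu)}(t)\;\in\;\QQ(\mu)\cdot i^{k r_F m}\cdot c_{\chi\otimes\sgn_\infty^{\delta}}.
$$
Theorem \ref{thm:icontribution} then handles the $\delta$ part: $c_{\chi\otimes\sgn_\infty^{\delta}}\in\QQ(\mu)\cdot i^{\delta r_F m}\cdot c_\chi$ (trivially for $\delta=0$, directly by the theorem for $\delta=1$). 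Combining, and using $e_\infty(\chi',\iota_\infty(t))=\lambda_{\chi',\QQ(\mu)}(t)$, one gets
$$
e_\infty(\chi',\iota_\infty(t))\;\in\;\QQ(\mu)\cdot i^{(k+\delta)r_F m}\cdot c_\chi.
$$

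For the $L$-factor shift, one uses $\chi'_\infty=\chi_\infty\otimes\sgn_\infty^{k+\delta}\otimes|\cdot|_\infty^{k}$ together with the finite-order twist invariance of the archimedean $L$-factor of $\Pi_\infty$ recalled at the end of Sections \ref{sec:archimedeanintegral} and \ref{sec:shalikaarchimedeanintegral} to conclude
$$
L(\tfrac12,\Pi_\infty\otimes\chi')\;=\;L(\tfrac12+k,\Pi_\infty\otimes\chi).
$$
Inserting this and the expression for $c_\chi$ into the previous displayed containment yields
$$
\Psi_\infty(\chi',\iota_\infty(t))\;\in\;\QQ(\mu)\cdot i^{(k+\delta)r_F m}\cdot\Psi_\infty(\chi,\iota_\infty(t_0))\cdot\frac{L_\infty(\tfrac12+k,\Pi_\infty\otimes\chi)}{L_\infty(\tfrac12,\Pi_\infty\otimes\chi)},
$$
which is the claim. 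There is no real obstacle here: the corollary is a bookkeeping consequence of the rationality of the functional, Corollary \ref{cor:icontribution}, and Theorem \ref{thm:icontribution}; the only technical point is the $L$-factor shift, and this reduces to the invariance of the archimedean $L$-factor under finite-order twists already recorded in the paper.
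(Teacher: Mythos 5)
Your proposal is correct and follows essentially the same route as the paper, which presents the corollary as an immediate consequence of Theorem \ref{thm:rationaltestvector}, Theorem \ref{thm:icontribution} and Corollary \ref{cor:icontribution}: unwind $\Psi_\infty=e_\infty\cdot L$, use the rationality and $i$-power statements for $\lambda_{\chi'}$ together with the constancy of $c_\chi$ on connected components, and shift the archimedean $L$-factor via $\mathcal N=\sgn_\infty\otimes|\cdot|_\infty$ and the finite-order twist invariance $L(s,\Pi_\infty\otimes\chi)=L(s,\Pi_\infty)$. The bookkeeping, including the identification $\chi[k]\otimes\sgn_\infty^\delta=(\chi\otimes\sgn_\infty^\delta)[k]$, matches the intended argument.
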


\subsection{Cohomology and Galois actions}

For any $\sigma\in\Aut(\CC/\QQ_K)$ we have a twisted representation
$$
M(\mu)_{E^\sigma}^\sigma:=M(\mu)_E\otimes_{E,\sigma^{-1}}\sigma^{-1}(E)
$$
of $G$. This twisting operation is compatible with the Galois action \eqref{eq:galoismu} on highest weights. Consider the counit map
$$
\epsilon:\quad
M(\mu)\to(\res_{\QQ_K(\mu)/\QQ_K}M(\mu))\otimes_\QQ \QQ_K(\mu),
$$
where the right hand side is defined over $\QQ_K$. This map extends to an $E$-linear map
$$
\epsilon_E:\quad
M(\mu)_E\to
(\res_{\QQ_K(\mu)/\QQ_K}M(\mu))\otimes_{\QQ_K} E,
$$
which fits into a commutative diagram
$$
\begin{CD}
M(\mu)_E@>\epsilon_E>> (\res_{\QQ_K(\mu)/\QQ_K}M(\mu))_E\\
@V\sigma^{-1} VV @VV\sigma^{-1} V\\
M(\mu)_{E^\sigma}^\sigma @>\epsilon_E>> (\res_{\QQ_K(\mu)/\QQ_K}M(\mu))_{E^\sigma}\\
\end{CD}
$$
Simply put, the Galois action on $M(\mu)$ is compatible with the intrinsic Galois action on the $\CC$-valued points of the $\QQ_K$-rational module $\res_{\QQ_K(\mu)/\QQ_K}M(\mu)$. In particular, the latter may be thought of as the sum of the Galois conjugates of $M(\mu)$.

Now the same discussion applies mutatis mutandis to $A_{\lieq}(\mu)$ and $A_{\lieq}(\mu)\otimes M(\mu)^\vee$ instead of $M(\mu)$. Note that restricton of scalars does not commute with tensor products, but we have natural isomorphisms of $(\lieg,K)$-modules over $\QQ_K(\mu)$:
\begin{eqnarray*}
\res_{\QQ_K(\mu)/\QQ}(A_{\lieq}(\mu)\otimes M(\mu)^\vee)_{\QQ_K(\mu)}&=&
\res_{\QQ_K(\mu)/\QQ}(A_{\lieq}(\mu))_{\QQ_K(\mu)}\otimes M(\mu)^\vee\\
&=& A_{\lieq}(\mu)\otimes\res_{\QQ_K(\mu)/\QQ}(M(\mu)^\vee)_{\QQ_K(\mu)}.
\end{eqnarray*}
We introduce the $\QQ_K$-group
$$
GK=\{g\in G\mid \exists z\in Z^{\rm s}:\;zg=\theta(g)\}\;\subseteq\;G.
$$
It is the product of $K$ with the maximal $\QQ$-split torus in the center $Z$ of $G$. The natural isomorphism
$$
K/K^0C\;=\;GK/GK^0C\;=\;\pi_0(L),
$$
in light of \eqref{eq:aqintermediateinduction}, together with Shapiro's Lemma, implies
\begin{eqnarray*}
H^{\bullet}(\lieg,GK^0; A_{\lieq}(\mu)\otimes M(\mu)^\vee)_{E_K(\mu)}&=&
H^{\bullet}(\lieg,GK^0; \Gamma_{\lieg,CK^0}^{\lieg,K}(A_{\lieq}^\circ(\mu)\otimes M(\mu)^\vee))_{E_K(\mu)}\\
&=&
\ind_{\pi_0(L^0)}^{\pi_0(L)}H^{\bullet}(\lieg,GK^0; A_{\lieq}^\circ(\mu)\otimes M(\mu)^\vee)_{E_K(\mu)}\\
&=&
H^{\bullet}(\lieg,GK^0; A_{\lieq}^\circ(\mu)\otimes M(\mu)^\vee)_{E_K(\mu)}\otimes \QQ[\pi_0(L)]\\
&=:&
H^{\bullet}(\lieg,GK^0; A_{\lieq}^\circ(\mu)\otimes M(\mu)^\vee)[\pi_0(L)]_{E_K(\mu)},
\end{eqnarray*}
as $\pi_0(L)$-modules.

Introduce the bottom degrees
$$
b_n^{\RR}\;:=\;
\left\lfloor
\frac{n^2}{4}
\right\rfloor,\quad
b_n^{\CC}\;:=\;\frac{n(n-1)}{2},
$$
which are the lowest degrees for which the relative Lie algebra cohomology of non-degenerate cohomological representations of $\GL_n(\RR)$ and $\GL_n(\CC)$ does not vanish, and set
$$
d\;:=\;
\sum_{v}b_{n+1}^{F_v}+b_n^{F_v}.
$$
This is the bottom degree of Lie algebra cohomology for non-degenerate cohomological representations of $G(\RR)$. Since the cohomology of $A_{\lieq}^\circ(\mu)$ in the degree $d$ is one-dimensional, the standard descent argument \cite[Proposition 1.1]{januszewski2017} together with the Homological Base Change Theorem in loc.\ cit.\ shows that we have a natural isomorphism of $\pi_0(L)$-modules
\begin{equation}
H^{d}(\lieg,GK^0; A_{\lieq}(\mu)\otimes M(\mu)^\vee)_{\QQ_K(\mu)}\;=\;
\QQ_K(\mu)[\pi_0(L)].
\label{eq:cohomologycomponentaction}
\end{equation}
Applying the same restriction of scalars argument again, we see that 
$$
H^{d}(\lieg,GK^0; \res_{\QQ_K(\mu)/\QQ}(A_{\lieq}(\mu)\otimes M(\mu)^\vee))\;=\;
\res_{\QQ_K(\mu)/\QQ}\QQ_K(\mu)[\pi_0(L)]
$$
is defined over $\QQ_K$. Over $\CC$ it comes with a natural action of $\Aut(\CC/\QQ_K)$, and this action has an extension to a global $\QQ_K$-structure on the space of regular algebraic cusp forms, cf.\ Theorem A of \cite{januszewski2017}.

\subsection{Cohomological test vectors}\label{sec:cohomologicaltestvectors}

We have a natural isomorphism
\begin{equation}
H^{\bullet}(\lieg,GK^0; A_{\lieq}(\mu)\otimes M(\mu)^\vee)_{\QQ_K(\mu)}\;=\;
H^0(GK^0; \bigwedge^\bullet(\lieg/\liegk)^\vee\otimes 
A_{\lieq}(\mu)\otimes M(\mu)^\vee)_{\QQ_K(\mu)}
\label{eq:cohomologyrepresentation}
\end{equation}
of $\pi_0(L)$-modules, since the standard complex computing the relative Lie algebra cohomology degenerates in our case. This is well known over $\CC$ (cf.\ combine Proposition 3.1 in Borel-Wallach \cite{borelwallach2000} with Proposition 3.1 in \cite{januszewski2017}), and this already implies the claim over $\QQ_K(\mu)$.

The structure of $(\lieg,K)$-cohomology has been studied in general by Vogan and Zuckerman in \cite{voganzuckerman1984}. In particular, the canonical embedding
$$
H^0(GK^0; \bigwedge^\bullet(\lieg/\liegk)^\vee\otimes 
B_{\lieq}(\mu)\otimes M(\mu)^\vee)_{\QQ_K(\mu)}\;\to\;
H^0(GK^0; \bigwedge^\bullet(\lieg/\liegk)^\vee\otimes 
A_{\lieq}(\mu)\otimes M(\mu)^\vee)_{\QQ_K(\mu)}
$$
is an isomorphism. Any cohomology class
$$
h\;\in\;H^{d}(\lieg,GK^0; A_{\lieq}(\mu)\otimes M(\mu)^\vee)_{\QQ_K(\mu)}
$$
has by \eqref{eq:cohomologyrepresentation} a unique representative
$$
h\;=\;
\sum_{p=1}^s \omega_p\otimes a_p\otimes m_p
\;\in\;H^0(GK^0; \bigwedge^d(\lieg/\liegk)^\vee\otimes 
B_{\lieq}(\mu)\otimes M(\mu)^\vee)_{\QQ_K(\mu)},
$$
with
$$
\omega_p\;\in\;\bigwedge^d(\lieg/\liegk)_{\QQ_K(\mu)}^\vee,\quad
a_p\;\in\;B_{\lieq}(\mu)_{\QQ_K(\mu)},\quad
m_p\;\in\;M(\mu)_{\QQ_K(\mu)}^\vee,\quad 1\leq p\leq s.
$$
Identifying $Z^{\rm s}$ with the maximal $\QQ$-split torus in the center of $G$ and observe that $H\cap Z^{\rm s}=1.$

Consider the diagonal embedding $(\lieh/\liel)_{\QQ_K(\mu)}\to(\lieg/\liegk)_{\QQ_K(\mu)},$ which dually induces a projection $(\lieg/\liegk)_{\QQ_K(\mu)}^\vee\to(\lieh/\liel)_{\QQ_K(\mu)}^\vee.$ The $d$-th exterior power of this map gives rise to the restriction map
$$
\res^G_H:\quad \bigwedge^d(\lieg/\liegk)_{\QQ_K(\mu)}^\vee\;\to\;
\bigwedge^d(\lieh/\liel)_{\QQ_K(\mu)}^\vee,
$$
where the right hand side is one-dimensional due to the numerical coincidence $d=\dim\lieh/\liel.$ We fix a $\QQ_K(\mu)$-rational basis vector
$$
0\neq w_0\;\in\;
\bigwedge^d(\lieh/\liel)_{\QQ_K(\mu)}.
$$
The choice of $w_0$ amounts to choosing an isomorphism of vector spaces
$$
\bigwedge^d(\lieh/\liel)_{\QQ_K(\mu)}^\vee\;\to\;\QQ_K(\mu),\quad
\omega\;\mapsto\;\omega(w_0).
$$
The left hand side is a one-dimensional $L$-module via the adjoint action on
$\lieh/\liel$, and we furnish the right hand side with an action of $L$ such that the above map becomes $L$-linear. The resulting one-dimensional $L$-module is denoted $\mathcal L$. It is of finite order: $\mathcal L^{\otimes 2}\;\cong\;{\bf 1}$.

Assume that the character $\mathcal N^{\otimes k}$ is critical for $M(\mu)^\vee$ and fix a non-zero element
$$
0\neq\xi_k\;\in\;\Hom_H(M(\mu)^\vee,\mathcal N^{\otimes k})_{\QQ_K(\mu)}.
$$
By \cite[Theorem 2.3]{kastenschmidt2008} (or remark \ref{rmk:twistinvarianceatinfinity}), all finite order twists
$$
\chi\;=\;\mathcal N^{\otimes k}\otimes\sgn_{\infty}^\delta,\quad\delta\in\{0,1\}^{r_F^\RR},
$$
are critical quasi-characters of $H(\RR).$ Now for each such $\chi$ and any
$$
\lambda\;\in\;\Hom_{\lieh,C}(A_{\lieq}(\mu),\chi^{-1})_{\QQ_K(\mu)},
$$
the $\QQ_K(\mu)$-rational functionals $\lambda$ and $\xi_k$ induce a $\QQ_K(\mu)$-rational $\pi_0(L)$-equivariant map
$$
\begin{CD}
H(\lambda\otimes\xi_k):\quad
H^d(\lieg,GK^0;A_{\lieq}(\mu)\otimes M(\mu)^\vee)_{\QQ_K(\mu)}@>{\lambda\otimes\xi_k}>>
H^d(\lieh,ML^0;\chi^{-1}[k])_{\QQ_K(\mu)}.
\end{CD}
$$
By Poincar\'e duality, our choice of vector $w_0$ induces an isomorphism
$$
H^d(\lieh,ML^0;\chi^{-1}[k])_{\QQ_K(\mu)}\;\to\;
(\mathcal L\otimes\sgn_{\infty}^\delta)_{\QQ_K(\mu)},
$$
of $\pi_0(L)$-modules. The composition of the latter with $H(\lambda\otimes\xi_k)$, provides us with a $\pi_0(L)$-equivariant map
$$
I(\lambda\otimes\xi_k):\quad
H^d(\lieg,GK^0;A_{\lieq}(\mu)\otimes M(\mu)^\vee)_{\QQ_K(\mu)}\;\to\;
(\mathcal L\otimes\sgn_{\infty}^\delta)_{\QQ_K(\mu)},
$$
which on the level of complexes is given explicitly by
$$
h=\sum_{p=1}^s\omega_p\otimes a_p\otimes m_p\;\mapsto\;\sum_{p=1}^s\omega_p(w_0)\otimes \lambda(a_p)\otimes \xi_k(m_p).
$$

\subsection{The global period relation}

Let $\chi=\otimes_v\chi_v$ be an algebraic Hecke character of $F$ with
$$
\chi_\infty\;=\;\mathcal N^{\otimes k(\chi)}\otimes\sgn_\infty^{\delta(\chi)}
$$
critical. The period $\Omega(\chi_\infty)\in\CC^\times$ in the global special value formula (cf.\ Theorem 1.1 \cite{raghuram2015}),
$$
\frac{\Lambda(\frac{1}{2},\Pi\otimes\chi)}
{G(\chi)^{m}\cdot
\Omega(\chi_\infty)}
\;\in\;
\QQ_K(\mu,\chi)
$$
arises as follows. We fix for each signature $\delta\in\{0,1\}^{r_F^\RR}$ a generator
$$
h_{\delta}
\;\in\;H^d(\lieg,GK^0;A_{\lieq}(\mu)\otimes M(\mu)^\vee)_{\QQ_K(\mu)}\;=\;\QQ_K(\mu)[\pi_0(L)]
$$
of the generalized $\sgn_\infty^{\delta}$-eigenspace for the action of $\pi_0(L)=\pi_0(K_n)$. In our application to $\chi$ we'll specialize later to the $\delta$ satisfying the compatibility condition
\begin{equation}
\sgn_\infty^{\delta}\;=\;\mathcal L\otimes \sgn_\infty^{\delta(\chi)+k(\chi)}.
\label{eq:chidelta}
\end{equation}
Combined with a choice of factorizable Whittaker vector
$$
t^{(\infty)}\;=\;\otimes_v t_v\;\in\;
\mathscr W(\Pi^{(\infty)},\psi^{(\infty)}),
$$
at the finite places $v\nmid\infty$, the cohomological vector $h_{\delta}$ gives rise to a cohomology class
$$
t_{\delta}\;:=\;
\sum_{p=1}^s \omega_p\otimes (\iota_\infty(a_p)\otimes t^{(\infty)})\otimes m_p\;\in\;
H^d(\lieg,GK^0;\mathscr W(\Pi,\psi)^{(K)}\otimes M(\mu)^\vee)_{\CC}.
$$
Inverse Fourier transform turns this class into an automorphic cohomology class
$$
\vartheta_{\delta}\;:=\;
\sum_{p=1}^s \omega_p\otimes \Theta(\iota_\infty(a_p)\otimes t^{(\infty)})\otimes m_p\;\in\;
H^d(\lieg,GK^0;\Pi^{(K)}\otimes M(\mu)^\vee)_{\CC}.
$$
Via the realization
$$
\Pi\;\subseteq\;L_0^2(G(\QQ)Z(\Adeles)\backslash G(\Adeles);\omega_\Pi),
$$
this class gives then rise to a global cohomology class
$$
c(t_{\delta})\;\in\;H_{\rm c}^d(G(\QQ)\backslash G(\Adeles)/GK(\RR)^0K^{(\infty)};\underline{M(\mu)^\vee}_\CC),
$$
with coefficients in the local system associated to $M(\mu)^\vee$, and a suitable compact open $K^{(\infty)}$ which is small enough that $t'$ is $K^{(\infty)}$-invariant and additonally the underlying orbifold is a manifold.

Now the cohomology with compact support carries a natural $\QQ_K(\mu)$-structure, inherited from the $\QQ_K(\mu)$-structure of the rational representation $M(\mu)$. By the work of Clozel \cite{clozel1990}, we know that $\Pi^{(\infty)}$ is defined over its field of rationality $\QQ(\Pi)$. Since the map $t\mapsto c(t)$ is Hecke-equivariant, and since $\Pi^{(\infty)}$ occurs in degree $d$ with multiplicity one by Matsushima's Formula, we may, under the assumption that $t_{\delta}$ is chosen in the natural $\QQ(\Pi)$-rational structure of $\mathscr W(\Pi^{(\infty)},\Psi^{(\infty)})$ (cf.\ \cite{harder1983,mahnkopf2005,raghuramshahidi2008}), renormalize $c(t')$ via a scalar $\Omega(t_{\delta})\in\CC^\times$, such that
\begin{equation}
\Omega(t_{\delta})\cdot c(t_{\delta})\;\in\;H_{\rm c}^d(G(\QQ)\backslash G(\Adeles)/GK(\RR)^0K^{(\infty)};\underline{M(\mu)}_{\QQ(\Pi)}).
\label{eq:normalizedrationalclass}
\end{equation}

To each algebraic Hecke character $\chi$ over $F$, that we interpret as a character of $H(\Adeles)$ via composition with the determinant, we may associate a cohomology class as follows. We denote the corresponding character of $H$ as $(k(\chi))$. Then attached to $\chi$ is a rational cohomology class
$$
c_\chi\;\in\;
H^0(H(\QQ)\backslash H(\Adeles)/L(\RR)^0L^{(\infty)}(\chi);\underline{(k(\chi))}_{\QQ(\chi)}),
$$
where $L^{(\infty)}(\chi)\subseteq K^{(\infty)}\cap H(\Adeles^{(\infty)})$ is a compact open such that the finite order character $\chi$ factors over $\det(L^{(\infty)}(\chi))$.

The natural map
$$
H_{\rm c}^d(G(\QQ)\backslash G(\Adeles)/GK(\RR)^0K^{(\infty)};\underline{M(\mu)^\vee}_{\QQ(\Pi)})\;\to\;
$$
$$
H_{\rm c}^d(H(\QQ)\backslash H(\Adeles)/L(\RR)^0L^{(\infty)};\underline{M(\mu)^\vee}_{\QQ(\Pi)})\;\to\;
$$
$$
\begin{CD}
@>(-)\cup c_\chi>>
H_{\rm c}^d(H(\QQ)\backslash H(\Adeles)/L(\RR)^0L^{(\infty)};\underline{M(\mu)^\vee\otimes(k(\chi))}_{\QQ(\Pi,\chi)}),
\end{CD}
$$
together with Poincar\'e duality for the right hand side, induces the modular symbol
$$
H_{\rm c}^d(G(\QQ)\backslash G(\Adeles)/GK(\RR)^0K^{(\infty)};\underline{M(\mu)^\vee}_{\QQ(\Pi)})\;\to\;
H^0(\Gamma; M(\mu)^\vee\otimes(k(\chi))_{\QQ(\Pi)}),
$$
where $\Gamma\subseteq H(\QQ)$ is the arithmetic subgroup corresponding to $L^{(\infty)}$.

Composition of the modular symbol with $\xi_k$ provides us with a $\pi_0(L)$-equivariant map
$$
H_{\rm c}^d(G(\QQ)\backslash G(\Adeles)/GK(\RR)^0K^{(\infty)};\underline{M(\mu)}_{\QQ(\Pi)})\;\to\;
(\mathcal N^{\otimes k+k(\chi)})_{\QQ(\Pi)}.
$$
The image of the normalized class \eqref{eq:normalizedrationalclass} under this map is essentially the algebraic part of the $L$-value $L(\frac{1}{2}+k,\Pi\otimes\chi)$. To be more precise, the image of $\Omega(t_\delta)^{-1}\cdot c(t_{\delta})$ under this map computes the global integral
$$
\Omega(t_\delta)^{-1}\cdot
\sum_{p=1}^r
\omega_p(w_0)\cdot I(\frac{1}{2}+k,\chi,\Theta(\iota_\infty(a_p)\otimes t^{(\infty)}))\cdot\xi_k(m_p)\;\in\;\mathcal N^{\otimes k}_\CC,
$$
an expression that vanishes whenever the compatibility condition \eqref{eq:chidelta} is violated, i.e.\ we may assume that
$$
\sgn_\infty^{\delta+k}\;=\;\mathcal L\otimes \sgn_\infty^{\delta(\chi)+k(\chi)}.
$$
By the non-archimedean period relation calculated by Raghuram-Shahidi in \cite[Theorem 4.1]{raghuramshahidi2008}, we know that we may find a $\QQ(\pi,\chi)$-rational $t^{(\infty)}\in\mathscr W(\Pi^{(\infty)},\psi^{(\infty)})$, such that
$$
e^{(\infty)}(\chi^{(\infty)}|\cdot|_{\Adeles^{(\infty)}}^s,t^{(\infty)})\;=\;G(\chi)^{m}
L(s,\Pi^{(\infty)}\otimes\chi^{(\infty)}),
$$
where $m$ is as before. In other words, the image of the corresponding cohomology class $c(t_\delta)$ under the modular symbol and $\xi_k$ computes the value
$$
G(\chi)^{m}\cdot
\Lambda(\frac{1}{2}+k,\Pi\otimes\chi)\cdot
\sum_{p=1}^r
\omega_p(w_0)\cdot e_\infty(\frac{1}{2}+k,\chi_\infty,\iota_\infty(a_p))\cdot\xi_k(m_p).
$$
Now the values $a_p$ lie in the same $\QQ_K(\mu)$-rational subspace as the good test vector $t$, and by Corollary \ref{cor:criticalrelation} we conclude the proof of the desired period relation. We obtain the global

\begin{theorem}\label{thm:globalrankinrelation}
  Let $n\geq1.$ If $n\geq 3$ or $n\geq 2$ if $F$ admits a complex place, assume the validity of Conjecture \ref{conj:rankinfunctional0} or of Conjecture \ref{conj:rankinfunctional} for a balanced weight $\mu$ of $G.$
  
  Let $(\Pi_1,\Pi_2)$ be a pair of regular algebraic irreducible cuspidal automorphic representations of $\GL_{n+1}(\Adeles_F)$ and $\GL_n(\Adeles_F)$ respectively. Assume that $\Pi_\infty=\Pi_{1}\widehat{\otimes}\Pi_{2}$ is of balanced cohomological weight $M.$ Denote by $s_0=\frac{1}{2}+k$ the left most critical value of the Rankin-Selberg $L$-function $L(s,\Pi_1\times\Pi_2)$ (such an $s_0$ exists).

  Then there exist non-zero periods $\Omega_\pm$, numbered by the characters $\pm$ of $\pi_0((F\otimes_\QQ\RR)^\times)$, such that for each critical half integer $s_1=\frac{1}{2}+k_1$ for $L(s,\pi\times\sigma)$, and each finite order Hecke character
$$
\chi:\quad F^\times\backslash\Adeles_F^\times\to\CC^\times,
$$
we have
$$
\frac{L(s_1,\Pi_1\times(\Pi_2\otimes\chi)}
{G(\overline{\chi})^{\frac{(n+1)n}{2}}\Omega_{(-1)^{k_1}\sgn\chi}}
\;\in\;
\frac{L(s_0,\Pi_{1,\infty}\times\Pi_{2,\infty})}
{L(s_1,\Pi_{1,\infty}\times\Pi_{2,\infty})}\cdot
i^{k_1[F:\QQ]\frac{(n+1)n}{2}}
\QQ(\Pi_1,\Pi_2,\chi).
$$
Furthermore, for every $\tau\in\Aut(\CC/\QQ)$,
$$
i^{-k_1[F:\QQ]\frac{(n+1)n}{2}}
\frac{L(s_1,\Pi_{1,\infty}\times\Pi_{2,\infty})}
{L(s_0,\Pi_{1,\infty}\times\Pi_{2,\infty})}\cdot
\frac{L(s_1,\Pi_1^\tau\times(\Pi_2^\tau\otimes\chi^\tau)}
{G(\overline{\chi^\tau})^{\frac{(n+1)n}{2}}\Omega_{(-1)^{k_1}\sgn\chi^\tau}}
\;=\quad\quad
$$
$$
\quad\quad
\left(
i^{-k_1[F:\QQ]\frac{(n+1)n}{2}}
\frac{L(s_1,\Pi_{1,\infty}\times\Pi_{2,\infty})}
{L(s_1,\Pi_{1,\infty}\times\Pi_{2,\infty})}\cdot
\frac{L(s_0,\Pi_1\times(\Pi_2\otimes\chi)}
{G(\overline{\chi})^{\frac{(n+1)n}{2}}\Omega_{(-1)^{k_1}\sgn\chi}}
\right)^\tau.
$$
\end{theorem}

\section{Deligne's Conjecture}\label{sec:deligne}

\subsection{Motives and Deligne's Conjecture for $M(\Pi_1)\otimes M(\Pi_2)\otimes\chi$}

Attached to $\Pi_1$ and $\Pi_2$ are conjectural simple motives $M(\Pi_1)$ and $M(\Pi_2)$ of ranks $n+1$ and $n$ over $F,$ characterized by suitable identities of $L$-functions, cf.\ \cite{clozel1990}. By Grothendieck's Standard Conjectures, we expect the category of motives to be tannakian, i.e.\ in particular it is expected to be a rigid tensor category, which allows us to consider the tensor product $M(\Pi):=M(\Pi_1)\otimes M(\Pi_2).$ Assuming this, we expect to have an identity
\begin{equation}
L(s-\frac{2n-1}{2},\Pi_1\times\Pi_2)\;=\;L(s,M(\Pi_1)\otimes M(\Pi_2)).
\label{eq:lidentity}
\end{equation}

Attached to $M(\Pi_1)\otimes M(\Pi_2)$ is a collection of realizations. Apart from the $\ell$-adic realizations for which we have candidates by recent work of Harris-Lan-Taylor-Thorne and Scholze whenever $F$ is totally real or a CM field, the Betti (or Hodge) and the de Rham realizations feature prominently in Deligne's Conjecture on special values \cite{deligne1979}.

Using these, Deligne defines for each finite order character $\varepsilon$ of
$$
\pi_0((F\otimes_\QQ\RR)^\times)\;=\;\pi_0(L),
$$
complex periods
\begin{equation}
c_\varepsilon\;=\;\prod_v c_v^{\epsilon({\bf1}_v)}(M(\Pi_1)\otimes M(\Pi_2)),
\label{eq:periodproduct}
\end{equation}
well defined modulo the field of rationality $E=\QQ(\Pi_1,\Pi_2),$ which by our identification of $L$-functions \eqref{eq:lidentity} comes with a fixed embedding $E\to\CC.$ Furthermore, Deligne defines natural numbers
$$
d^\varepsilon\;=\;\sum_v d_v^{\epsilon({\bf1}_v)}
$$
as dimensions of certain cohomology spaces. In the case at hand, $d^\varepsilon$ is expected to be independent of $\varepsilon$ and given by
$$
d^\varepsilon\;=\;(r_F^\RR+2r_F^\CC)\cdot\frac{(n+1)n}{2}\;=\;[F:\QQ]\cdot\frac{(n+1)n}{2}.
$$
In this setting, Deligne's Conjecture \cite[Conjecture 2.8]{deligne1979} reads in our case (cf.\ (5.1.8) and section 8 in loc.\ cit., see also \cite{blasius1997} for the behaviour under finite order character twists, as well as \cite{blasius1987} for the periods attached to tensor products)
\begin{conjecture}[Deligne]\label{conj:deligne}
For each $k\in\ZZ$ critical for $M(\Pi_1)\otimes M(\Pi_2)$ and each finite order character $\chi:\Gal(\overline{F}/F)\to\CC^\times$ we have
$$
\frac{L(k,M(\Pi_1)\otimes M(\Pi_2)\otimes\chi)}
{G(\overline{\chi})^{[F:\QQ]\cdot\frac{(n+1)n}{2}}(2\pi i)^{k[F:\QQ]\cdot\frac{(n+1)n}{2}}c_{(-1)^k\sgn\chi}}\;\in\;E(\chi).
$$
\end{conjecture}

\subsection{Compatibility with Deligne's Conjecture}

A direct computation (cf.\ Proposition 1.5 in \cite{kastenschmidt2008} and also the formula given on p.\ 219 and p.\ 220 of loc.\ cit.), it is easy to see that
\begin{equation}
\frac{L(s_1,\pi_\infty\times\sigma_\infty)}
{L(s_0,\pi_\infty\times\sigma_\infty)}\;\in\;(2\pi)^{(s_0-s_1)[F:\QQ]\frac{(n+1)n}{2}}\QQ^\times.
\label{eq:picalc}
\end{equation}

Incorporating the results from the previous section into Theorem \ref{thm:globalrankinrelation} we obtain

\begin{theorem}\label{thm:globalrankinperiods}
  Let $F$ be a number field and let $n\geq 1.$ If $n\geq 3$ or $n\geq 2$ if $F$ admits a complex place, assume the validity of Conjecture \ref{conj:rankinfunctional0} or of Conjecture \ref{conj:rankinfunctional} for a balanced weight $\mu$ of $G.$

  Let $\Pi_1,$ $\Pi_2$ be a pair regular algebraic irreducible cuspidal automorphic representation of $\GL_{n+1}(\Adeles_F)$ and $\GL_n(\Adeles_F)$ respectively such that $\Pi_{1}\widehat{\otimes}\Pi_{2}$ is of balanced cohomological weight $\mu.$ Then there exist non-zero periods $\Omega_\pm$, numbered by the $2^{r_F^\CC}$ characters $\pm$ of $\pi_0((F\otimes_\QQ\RR)^\times)$, such that for each critical half integer $s_0=\frac{1}{2}+k$ for $L(s,\Pi_1\times\Pi_2),$ and each finite order Hecke character
$$
\chi:\quad F^\times\backslash\Adeles_F^\times\to\CC^\times,
$$
we have, in accordance with Deligne's Conjecture \ref{conj:deligne},
$$
\frac{L(s_0,\Pi_1\times(\Pi_2\otimes\chi))}
{G(\overline{\chi})^{\frac{(n+1)n}{2}}(2\pi i)^{kr_F\frac{(n+1)n}{2}}
\Omega_{(-1)^{k}\sgn\chi}}
\;\in\;
\QQ_K(\Pi_1,\Pi_2,\chi).
$$
Furthermore, this expression is $\Aut(\CC/\QQ_K)$-equivariant.
\end{theorem}

Recall that $\QQ_K=\QQ$ whenever $F$ is totally real or a CM field.

\begin{corollary}\label{cor:deligneverification}
  Assume that $F$ is totally real or a CM field. Then under the assumptions of Theorem \ref{thm:globalrankinperiods} Deligne's Conjecture \ref{conj:deligne} for the conjectural motive $M(\Pi_1)\otimes M(\Pi_2)\otimes\chi$, for a rank $1$ Artin motive $\chi,$ is equivalent to the statement
$$
\frac{
\Omega_{(-1)^{j}\sgn\chi}}
{(2\pi i)^{r_F\frac{(n+1)n^2}{2}}c_{(-1)^{j+n}\sgn\chi}}
\;\in\;\QQ(\Pi_1,\Pi_2)^\times,\quad j\in\{0,1\}.
$$
\end{corollary}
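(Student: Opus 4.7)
The plan is to compare the two rationality statements---Theorem \ref{thm:globalrankinperiods} and Deligne's Conjecture \ref{conj:deligne}---which govern the same critical $L$-value and differ only in their normalising periods. First I would use the motivic identity \eqref{eq:pimotivel} to align the critical points: the half-integer $s_0 = \tfrac{1}{2} + j$ of $L(s, \pi\times\sigma)$ corresponds to the critical integer $k = j + n$ of $L(s, M(\pi,\sigma))$, so that $(-1)^k\sgn\chi = (-1)^{j+n}\sgn\chi$ reproduces the index of $c$ in the corollary, while the index of $\Omega$ is the $(-1)^j\sgn\chi$ coming from Theorem \ref{thm:globalrankinperiods}.

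The heart of the argument is dividing Deligne's expression by the expression proved in Theorem \ref{thm:globalrankinperiods}. The critical $L$-value is non-vanishing at $s_0$ and cancels. By \eqref{eq:dcalc} the motivic dimension $d^\varepsilon = r_F(n+1)n/2$ is independent of the sign $\varepsilon$, and (with the paper's Gauss-sum normalisation, as the remark following \eqref{eq:dcalc} emphasises) the factors $G(\overline{\chi})$ in numerator and denominator cancel. The surviving $(2\pi i)$-exponent is
\[
k \cdot d^\varepsilon \;-\; j \cdot r_F\tfrac{(n+1)n}{2} \;=\; n \cdot r_F\tfrac{(n+1)n}{2} \;=\; r_F\tfrac{(n+1)n^2}{2},
\]
matching the exponent in the corollary. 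Hence the equivalence of the two rationality statements reduces to the membership of $\Omega_{(-1)^j\sgn\chi}/[(2\pi i)^{r_F(n+1)n^2/2}\,c_{(-1)^{j+n}\sgn\chi}]$ in a suitable number field.

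To refine the field from $\QQ(\pi,\sigma,\chi)$ down to $\QQ(\pi,\sigma)$, I would appeal to the $\Aut(\CC/\QQ)$-equivariance of the normalised $L$-value asserted in Theorem \ref{thm:globalrankinperiods} together with the Galois-equivariance of Deligne's motivic periods $c^\pm$---a consequence of their definition via comparison isomorphisms and Blasius' analysis of twists in \cite{blasius1997}. Since $\Omega_\varepsilon$ and $c_\varepsilon$ depend on $\chi$ only through the signature $\sgn\chi$, the ratio is invariant under $\Gal(\overline{\QQ}/\QQ(\pi,\sigma))$ and therefore descends to $\QQ(\pi,\sigma)^\times$. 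Varying $j\in\{0,1\}$ accounts for the two parities of critical $s_0$, and varying $\sgn\chi$ realises all signatures in $\pi_0(F_\infty^\times)^\vee$.

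The main obstacle is the Gauss-sum cancellation: one has to trust (or verify) that the Hecke-character Gauss sum entering Theorem \ref{thm:globalrankinperiods} matches the Artin-motive Gauss sum entering Deligne's conjecture for $M(\pi,\sigma)\otimes\chi$ with matching exponent. This reconciliation of normalisations is the non-trivial arithmetic content hidden in the identification \eqref{eq:dcalc}; once accepted, the corollary is a direct bookkeeping exercise in powers of $(2\pi i)$.
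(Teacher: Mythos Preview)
Your argument is correct and is exactly the bookkeeping the paper intends (the corollary is stated without proof): the shift $k=j+n$ from \eqref{eq:lidentity}, the identification $d^\varepsilon=r_F\tfrac{(n+1)n}{2}$ from \eqref{eq:dcalc}, and the resulting residual power $(2\pi i)^{r_F(n+1)n^2/2}$ are all right. The one point to tighten is the converse implication, where you assert non-vanishing of the $L$-value in order to divide; this is not automatic at the central point, but is easily arranged by choosing $\chi$ quadratic (so $\QQ(\chi)=\QQ$, which simultaneously disposes of your field-descent from $\QQ(\pi,\sigma,\chi)$ to $\QQ(\pi,\sigma)$) together with a non-central critical $s_0$, or else by invoking a standard non-vanishing-in-twist-families argument.
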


The statement of the corollary extends to arbitrary $F$ if we adjoint $\QQ_K$ to the coefficient field $E.$

\begin{remark}
  The statement of Corollary \ref{cor:deligneverification} leaves out valuable finer structure, i.e.\ the conjectural implications that result from the fact that $M(\Pi_1)\otimes M(\Pi_2)$ is a tensor product (cf.\ \cite{blasius1987}), and also the finer description of Deligne's periods $c_\pm$ in terms of products of the periods $c_v^\pm(M(\Pi_1)\otimes M(\Pi_2))$ attached to (real) archimedean embeddings as in \eqref{eq:periodproduct}  (cf.\ \cite{blasius1997}). These finer results are known for Hilbert modular forms \cite{harris1993}.
  \end{remark}

\bibliographystyle{plain}

\end{document}